\UseRawInputEncoding
\documentclass[12pt, twoside]{article}

\usepackage{amsmath,amsthm,amssymb,color}
\usepackage{times}
\usepackage{enumerate}
\usepackage{empheq}
\usepackage{url}
\usepackage{lineno}
\usepackage{microtype}
\usepackage{cases}
\usepackage[american]{babel}
\usepackage{indentfirst}
\setlength\parskip{.3\baselineskip}

\pagestyle{myheadings}
\def\titlerunning#1{\gdef\titrun{#1}}
\makeatletter
\def\author#1{\gdef\autrun{\def\and{\unskip, }#1}\gdef\@author{#1}}
\def\address#1{{\def\and{\\\hspace*{18pt}}\renewcommand{\thefootnote}{}%
\footnote {#1}}%
\markboth{\autrun}{\titrun}}
\makeatother
\def\email#1{e-mail: #1}
\def\subjclass#1{{\renewcommand{\thefootnote}{}%
\footnote{\emph{Mathematics Subject Classification (2020):} #1}}}

\newtheorem{thm}{Theorem}
\newtheorem{prop}{Proposition}
\newtheorem{lem}{Lemma}
\newtheorem{de}{Definition}
\newtheorem{re}{Remark}
\newtheorem{ex}{Example}
\numberwithin{equation}{section}

\DeclareMathOperator*{\supp}{supp}

\newcommand{\g}{\gamma}
\newcommand{\R}{\mathbb{R}}
\newcommand{\T}{\mathbb{T}}
\newcommand{\s}{\tau}

\newcommand{\p}{\mathcal{P}(\mathbb{T}^d)}

\frenchspacing
\textwidth=16.5cm
\textheight=23cm
\parindent=16pt
\oddsidemargin=-0.5cm
\evensidemargin=-0.5cm
\topmargin=-0.8cm


\begin{document}
\baselineskip=16pt

\titlerunning{First Order Discounted Mean Field Games}
\title{A semi-discrete approximation for first-order stationary mean field games}
\author{Renato Iturriaga \and Kaizhi Wang}

\maketitle

\address{Renato Iturriaga: Centro de Investigaci\'on en Matem\'aticas, Guanajuato, Mexico;
\email{renato@cimat.mx}
\and Kaizhi Wang: School of Mathematical Sciences, Shanghai Jiao Tong University, Shanghai 200240, China;
\email{kzwang@sjtu.edu.cn}
}
\subjclass{37J51,35Q89}

\begin{abstract}
We provide an approximation scheme for first-order stationary mean field games with a separable Hamiltonian. First, we discretize Hamilton-Jacobi equations by discretizing in time, and then prove the existence of minimizing holonomic measures for mean field games. At last, we obtain two sequences of solutions $\{u_i\}$ of discrete Hamilton-Jacobi equations and minimizing holonomic measures $\{m_i\}$ for mean field games and show that $(u_i,m_i)$ converges to a solution  of the stationary mean field games.

\end{abstract}

\tableofcontents


\section{Introduction}
\setcounter{equation}{0}
Mean field games \cite{H,LL1,LL2,LL} consists of studying the global behavior of systems composed of infinitely many agents which interact in a symmetric manner. A first-order mean field games model is a coupled system of partial differential equations, one Hamilton-Jacobi equation and one continuity equation. Here, we focus on the first-order ergodic (or stationary) mean field games system
\begin{subequations}\label{lab1}
	\begin{empheq}[left=\empheqlbrace]{align}
		&  H(x, Du)=F(x, m)+c(m) \quad \text{in} \quad \mathbb{T}^d, \\[1mm]
		& \text{div}\Big(m \frac{\partial H}{\partial p}(x, Du)\Big)=0    \quad\quad\quad \text{in} \quad \mathbb{T}^d, \\[1mm]  & \int_{\mathbb{T}^d}{m\ dx}=1.
	\end{empheq}
\end{subequations}
This system arises in the study of the long-time behavior problem of first-order mean field games with finite horizon \cite{C3}.  In this work, we aim to study a semi-discrete in time approximation of the first-order ergodic  mean field games system \eqref{lab1}. We are concerned with the convergence of the discrete scheme. In a forthcoming paper we will deal with a fully discrete approximation problem for \eqref{lab1}, where space discretization will be added. See \cite{CS1}, \cite{CS2} for semi-discrete and fully discrete approximation schemes for first-order evolutionary mean field games with finite horizon,  respectively.  We refer the readers to \cite{A0,A1,A2,A3,A4,A5,B} and the references therein for numerical methods and convergence results of different discrete schemes for second-order mean field games.

\subsection{Assumptions and main results}
Let $H(x,p): \mathbb{T}^d\times\mathbb{R}^d\to\mathbb{R}$ be a $C^2$ Hamiltonian satisfying  Tonelli conditions, where $\T^d:=\R^d/\mathbb{Z}^d$ denotes the standard flat torus. The associated Lagrangian is defined by
\[
L(x,v):=\sup_{v\in\mathbb{R}^d}\big(\langle p, v\rangle-H(x,p)\big),\quad (x,v)\in\T^d\times\R^d.
\]
Then $L$ satisfies:
\begin{itemize}
	\item [\textbf{(L1)}] \textbf{Strict convexity}: {\it for each $(x, v) \in \T^d\times\R^d, \frac{\partial^{2} L}{\partial v^{2}}(x, v)$ is positive definite;
	}
\item [\textbf{(L2)}] \textbf{Superlinearity}: for each $K>0$, there is $C(K)\in\R$ such that
\[
L(x,v)\geqslant K|v|+C(K),\quad \forall (x,v)\in \T^d\times\R^d.
\]
\end{itemize}

Let $\p$ denote the set of probability measures on $\T^d$.
Let the coupling term $F:\T^d\times \mathcal{P}(\mathbb{T}^d) \to \mathbb{R}$ be a function, satisfying the following  assumptions:
\begin{itemize}
	\item[\textbf{(F1)}] for each $m \in \mathcal{P}(\mathbb{T}^d)$, the function $x \mapsto F(x,m)$ is of class $C^{2}$, and there is a constant $F_\infty>0$ such that 	
	\begin{align*}
	\|F(\cdot,m)\|_\infty,\, \|D_xF(\cdot,m)\|_\infty\leqslant F_\infty,\quad \forall m\in\p,
	\end{align*}
	where  $\|\cdot\|_\infty$ denotes the supremum norm;
	\item[\textbf{(F2)}] $F(\cdot,\cdot)$ and $D_xF(\cdot,\cdot)$ are continuous on $\mathbb{T}^d\times \mathcal{P}(\mathbb{T}^d)$;
	\item[\textbf{(F3)}] there is a constant $\mathrm{Lip}(F)>0$ such that
	\[
	|F(x,m_1)-F(x,m_2)|\leqslant \mathrm{Lip}(F){d_{1}(m_1, m_2)},\quad \forall x\in\T^d,\, \forall m_1,m_2\in\p,
	\]
	where the distance $d_1$ is the Kantorovich-Rubinstein distance.
\end{itemize}

\begin{ex}
	Let $F(x,m)=f(x)g(m)$, where $f:\T^d\to\R$ is of class $C^2$, and $g:\p\to\R$ is Lipschitz. Then 
	\[
	\|F(\cdot,m)\|_\infty=\|f(\cdot)g(m)\|_\infty,\, \|D_xF(\cdot,m)\|_\infty=\|Df(\cdot)g(m)\|_\infty\leqslant F_\infty,\quad \forall m\in\p,
\]
for some $F_\infty>0$; $F(\cdot,\cdot)=f(\cdot)g(\cdot)$ and $D_xF(\cdot,\cdot)=Df(\cdot)g(\cdot)$ are continuous on $\T^d\times\p$; for each $x\in\T^d$, each  $m_1$, $m_2\in\p$,
\[
	|F(x,m_1)-F(x,m_2)|=|f(x)(g(m_1)-g(m_2))|\leqslant\|f\|_\infty\mathrm{Lip}(g)d_{1}(m_1, m_2).	
	\]	
\end{ex}

\begin{de}\label{ers}
	A solution of the mean field games system \eqref{lab1} is a couple $(u, m) \in C(\mathbb{T}^d) \times \mathcal{P}(\mathbb{T}^d)$ such that (1.1a) is satisfied in viscosity sense and (1.1b) is satisfied in distributions sense.
\end{de}
	
	\begin{re}
		Let us recall the definition of viscosity solutions of (1.1a) and the one of solutions of (1.1b) in distributions sense here.

	 A function $u:\T^d\rightarrow \mathbb{R}$ is called a viscosity subsolution of equation (1.1a), if for every $C^1$ function $\varphi:\T^d\rightarrow\mathbb{R}$ and every point $x_0\in \T^d$ such that $u-\varphi$ has a local maximum at $x_0$, we have
		\[
		H(x_0,D\varphi(x_0))\leqslant F(x_0,m)+c(m);
		\]
		A function $u:\T^d\rightarrow \mathbb{R}$ is called a viscosity supersolution of equation (1.1a), if for every $C^1$ function $\psi:\T^d\rightarrow\mathbb{R}$ and every point $y_0\in \T^d$ such that $u-\psi$ has a local minimum at $y_0$, we have
		\[
		H(y_0,D\psi(y_0))\geq F(y_0,m)+c(m);
		\]
	A function $u:\T^d\rightarrow\mathbb{R}$ is called a viscosity solution of equation (1.1a) if it is both a viscosity subsolution and a viscosity supersolution.

We say that a measure $m\in\p$ satisfies (1.1b) in the sense of distributions, if	
$$
	\int_{\T^d}\left\langle D f(x), \frac{\partial H}{\partial p}(x, D u(x))\right\rangle \mathrm{d} m(x)=0, \quad \forall f \in C^{\infty}\left(\T^d\right).
	$$
	\end{re}

For each $m\in\p$, $H_m(x,p):=H(x,p)-F(x,m)$ is a Tonelli Hamiltonian defined on $\T^d\times\R^d$. Denote by $L_m$ the associated Lagrangian, i.e.,
\[
L_m(x,v)=L(x,v)+F(x,m),\quad (x,v)\in\T^d\times\R^d.
\]
Denote by $\Phi^{L_m}_t$ and $\Phi^{H_m}_t$ the Euler-Lagrange flow of $L_m$ and the Hamiltonian flow of $H_m$, respectively.

\begin{re} Assume (L1), (L2) and (F1).
	\begin{itemize}
		\item [(i)] We used $c(m)$ in \eqref{lab1} to denote the Ma\~n\'e critical value \cite{Ma} of $H_m$. It is well known that for any given $m\in\p$, $c(m)$ is the unique real number $k$ such that equation $H_m(x,Du(x))=k$ has viscosity solutions.
		\item [(ii)] Let us recall the notion of Mather measures for Tonelli Lagrangians introduced by Mather in \cite{Mat91}. A measure $\mu\in\mathcal{P}(\T^d\times\R^d)$ is called a Mather measure for $L_{m}$, if it satisfies
	\[
	\int_{\T^d\times\R^d}L_{m}(x,v)d\mu=\min_{\mu}\int_{\T^d\times\R^d}L_{m}(x,v)d\mu=-c(m),
	\]
	where the minimum is taken over the set of all Borel probability measures on $\T^d\times\R^d$ invariant under the Euler-Lagrange flow $\Phi^{L_{m}}_{t}$. Let $\mathcal{P}^{\ell}(\T^d\times\R^d)$ be the set of probability measures on the Borel $\sigma$-algebra of $\T^d\times\R^d$ such that $\int_{\T^d\times\R^d}|v| d \mu<+\infty$. Define the set of closed measures on $\T^d\times\R^d$ as
	\[
	\mathcal{K}(\T^d\times\R^d):=\left\{\mu \in \mathcal{P}^{\ell}(\T^d\times\R^d): \int_{\T^d\times\R^d} vD \varphi(x) d \mu=0,\, \forall \varphi \in C^{1}(\T^d)\right\}.
	\]
	A closed measure $\mu$ satisfying  $\int_{\T^d\times\R^d}L_{m}(x,v)d\mu=-c(m)$ is a Mather measure.
	\item [(iii)]  Let $m_0\in\p$ be such that there is a Mather measure $\mu_0$ for $L_{m_0}$ with $m_0=\pi\sharp \mu_0$,  where $\pi: \T^d \times \mathbb{R}^{d} \rightarrow \T^d$ denotes the canonical projection, and $\pi\sharp \mu_0$ denotes the push-forward of $\mu_0$ through $\pi$. Let $u_0$ be any viscosity solution of $H(x,Du)=F(x,m_0)+c(m_0)$. Then $m_0$ satisfies $\text{div}\big(m \frac{\partial H}{\partial p}(x, Du_0)\big)=0$ in distributions sense. See \cite{HW} for details.
	\end{itemize}

\end{re}

\begin{re}
		A $\mathbb{Z}^d$-periodic function $u\in C(\R^d)$ is a viscosity solution of (1.1a) if and only if
		\begin{align}\label{5-2}
		u(y)-c(m)t=\inf_{x\in\R^d}\Big(u(x)+h^m_t(x,y)\Big),\quad \forall y\in\R^d,\, \forall t>0,
		\end{align}
		where
		\begin{align}\label{6-1}
		h_t^m(x,y):=\inf_{\g}\int_0^t L_m(\g,\dot{\g})ds,
		\end{align}
		where the infimum is taken among the continuous and piecewise $C^{1}$ paths $\gamma:[0, t] \rightarrow \R^d$ with $\g(0)=x$, $\g(t)=y$. See, for instance, \cite{Fat-b} for a proof. We call $h^m_t(x,y)$ the  minimal action function and the curves achieving the infimum in \eqref{6-1} minimizing curves of $L_m$ with the action $h_t^m(x,y)$.
\end{re}

For each $\s>0$ and each $m\in\p$, define the discrete action function by
\[
	\mathcal{L}_{\tau,m}(x,y):=\tau \big(L(x,\frac{y-x}{\tau})+F(x,m)\big),\quad \forall x, y\in\R^d.
	\]
According to \cite[Theorem 4.3]{GT} and \cite[Theorem 9]{ST}, under assumptions (L1), (L2) and (F1) one can deduce that for each $\s>0$, each $m\in\p$, there is a unique constant $\bar{L}(\s,m)\in\R$,  such that the discrete Lax-Oleinik equation
\begin{align}\label{de}
u_{\s,m}(y)+\s\bar{L}(\s,m)=\inf_{x\in\R^d}\big(u_{\s,m}(x)+\mathcal{L}_{\s,m}(x,y)\big), \quad  \forall y\in\R^d,
\end{align}
has continuous $\mathbb{Z}^d$-periodic solutions $u_{\s,m}$, and $\bar{L}(\s,m)\to-c(m)$ as $\s\to0$. The authors of  \cite{ST} showed the convergence of a subsequence of solutions of  discrete Lax-Oleinik equations.

Ma\~n\'e \cite{Ma1} introduced the notion of holonomic measures in his study of Mather theory. It has great advantage of dealing with different Lagrangians at the same time. Here we will use a discrete version of the notion of holonomic measures.
\begin{de}
	We say that a probability measure $\mu \in \mathcal{P}\left(\mathbb{T}^{d} \times \mathbb{R}^{d}\right)$ is $\s$-holonomic, provided
	$$
	\int_{\mathbb{T}^{d} \times \mathbb{R}^{d}} \varphi(x+\tau v) \mathrm{d} \mu(x, v)=\int_{\mathbb{T}^{d} \times \mathbb{R}^{d}} \varphi(x) \mathrm{d} \mu(x, v)
	$$
	for any $\varphi\in C(\mathbb{T}^{d})$. The set of $\s$-holonomic measures is denoted by $\mathcal{P}_{\tau}\left(\mathbb{T}^{d} \times \mathbb{R}^{d}\right)$.
\end{de}

In view of \cite[Definition 3.5 and Theorem 4.3]{GT}, we know that for each $\s>0$, each $m\in\p$,
	\begin{align}\label{5-1}
	\bar{L}(\tau,m)=\min _{\mu} \int_{\mathbb{T}^{d} \times \mathbb{R}^{d}} L_m(x,v) d\mu,
	\end{align}
	where the minimum is taken over $\mathcal{P}_{\tau}\left(\mathbb{T}^{d} \times \mathbb{R}^{d}\right)$. A measure $\mu$ attaining the minimum  is called a minimizing $\s$-holonomic measure for $L_m$.

The main result of the present paper is stated as follows.
\begin{thm}\label{ma}
	Assume (L1), (L2) and (F1)-(F3). Then
	\begin{itemize}
  \item [(i)] For each $\s>0$, there is $m\in\p$ such that there exists a minimizing $\s$-holonomic measure $\eta_{\s,m}$ for the Lagrangian $L_m$ with
  \[
  m=\pi\sharp \eta_{\s,m}.
  \]
  Such a measure $m$ is denoted by $m_\s$ (maybe not unique).
  \item [(ii)] There is a subsequence $\s_i\to0$, a subsequence $m_{\s_i}\stackrel{w^*}{\longrightarrow} m_0$, and a subsequence $u_{\s_i,m_{\s_i}}$ solutions of \eqref{de}  such that $u_{\s_i,m_{\s_i}}$ converges to $u_0$ uniformly on $\T^d$ and $(u_0,m_0)$ is a solution of \eqref{lab1}.
\end{itemize}
\end{thm}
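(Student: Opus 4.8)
The plan is to establish (i) through a fixed-point argument on the compact convex set $\p$, and then obtain (ii) by a compactness-plus-stability analysis as $\s\to0$.

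For part (i), I would fix $\s>0$ and introduce the set-valued map $\Psi_\s:\p\to 2^{\p}$ defined by
\[
\Psi_\s(m):=\{\pi\sharp\eta:\ \eta\ \text{is a minimizing}\ \s\text{-holonomic measure for}\ L_m\}.
\]
By \eqref{5-1} the set of minimizing $\s$-holonomic measures for $L_m$ is nonempty, and since it is the argmin of the linear functional $\eta\mapsto\int L_m\,d\eta$ over the convex set $\mathcal{P}_\s(\T^d\times\R^d)$, it is convex; as $\pi\sharp$ is affine, each $\Psi_\s(m)$ is a nonempty convex subset of the weak-$*$ compact convex set $\p$. The crux is to check that $\Psi_\s$ has closed graph. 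Concretely, if $m_n\to m$ weakly-$*$ and $\eta_n$ are minimizing $\s$-holonomic measures for $L_{m_n}$ with $\pi\sharp\eta_n\to\nu$, one must produce a minimizing $\s$-holonomic measure $\eta$ for $L_m$ with $\pi\sharp\eta=\nu$. I would first use superlinearity (L2), comparing $\bar{L}(\s,m_n)=\int L_{m_n}\,d\eta_n$ against a fixed competitor measure, to obtain a uniform bound on $\int|v|\,d\eta_n$; this yields tightness of $\{\eta_n\}$ in $\mathcal{P}^{\ell}(\T^d\times\R^d)$. Passing to a subsequence $\eta_n\to\eta$, the $\s$-holonomy constraint and the continuity (F2) of $F$ in $m$ pass to the limit, and lower semicontinuity of the action (together with the Lipschitz bound $|\bar{L}(\s,m_n)-\bar{L}(\s,m)|\leqslant\mathrm{Lip}(F)\,d_1(m_n,m)$ from (F3)) forces $\eta$ to be minimizing for $L_m$. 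The Kakutani--Fan--Glicksberg theorem then yields a fixed point $m_\s\in\Psi_\s(m_\s)$, which is exactly the statement of (i).

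For part (ii), with the fixed points $m_\s$, the minimizing measures $\eta_{\s,m_\s}$, and the discrete Lax--Oleinik solutions $u_{\s,m_\s}$ in hand, I would extract $\s_i\to0$ with $m_{\s_i}\to m_0$ weakly-$*$ by compactness of $\p$. Standard a priori estimates for the discrete operator in \eqref{de}, using the Tonelli conditions and the uniform bound $F_\infty$ of (F1), give an equi-Lipschitz family $\{u_{\s_i,m_{\s_i}}\}$; after normalizing the free additive constant these are uniformly bounded, so Arzel\`a--Ascoli produces a further subsequence converging uniformly to some $u_0$. To see that $u_0$ is a viscosity solution of (1.1a), I would combine $\bar{L}(\s,m)\to-c(m)$ with the uniform-in-$\s$ Lipschitz dependence of both $\bar{L}(\s,\cdot)$ and $c(\cdot)$ on $m$ (again from (F3) and the corresponding Lipschitz dependence of the Ma\~n\'e critical value on the potential), which gives $\bar{L}(\s_i,m_{\s_i})\to-c(m_0)$, and then invoke stability of viscosity solutions under the $\s\to0$ limit of the discrete scheme together with the continuity of $F(x,\cdot)$ to conclude $H(x,Du_0)=F(x,m_0)+c(m_0)$ in the viscosity sense.

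The remaining point is the continuity equation (1.1b). Here the key observation is that the $\s$-holonomic constraint
\[
\int_{\T^d\times\R^d}\varphi(x+\s v)\,d\eta=\int_{\T^d\times\R^d}\varphi(x)\,d\eta
\]
degenerates, after subtracting $\int\varphi(x)\,d\eta$, dividing by $\s$, and letting $\s\to0$, into the closedness condition $\int vD\varphi(x)\,d\mu=0$. Thus, extracting a weak-$*$ limit $\mu_0$ of $\eta_{\s_i,m_{\s_i}}$ (whose existence again rests on the uniform first-moment bound from superlinearity), $\mu_0$ is a closed measure in the sense of $\mathcal{K}(\T^d\times\R^d)$. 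Since $\int L_{m_{\s_i}}\,d\eta_{\s_i,m_{\s_i}}=\bar{L}(\s_i,m_{\s_i})\to-c(m_0)$, lower semicontinuity of the action and the characterization of closed minimizing measures identify $\mu_0$ as a Mather measure for $L_{m_0}$ with $\pi\sharp\mu_0=m_0$. The fact recorded above, that the projection of a Mather measure $\mu_0$ for $L_{m_0}$ satisfies $\mathrm{div}\big(m\,\partial_p H(x,Du_0)\big)=0$ in distributions against any viscosity solution $u_0$, then yields (1.1b) and completes the proof. I expect the two principal obstacles to be the closed-graph verification in part (i) and the simultaneous passage of the holonomic constraint to closedness and upgrade of the limit to a genuine Mather measure in part (ii); both rely decisively on the uniform $\int|v|\,d\eta$ estimates furnished by superlinearity.
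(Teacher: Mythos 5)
Your architecture is the same as the paper's: part (i) is Proposition \ref{pr7} there, proved by Kakutani's theorem applied to exactly your set-valued map, and part (ii) is assembled, as you propose, from weak-$*$ compactness of $\p$, equi-Lipschitz bounds and Arzel\`a--Ascoli for the $u$'s, convergence $\bar L(\s_i,m_{\s_i})\to -c(m_0)$ via (F3), identification of a limit measure $\mu_0$ as a Mather measure for $L_{m_0}$ with $\pi\sharp\mu_0=m_0$, and the recorded fact that such a projection solves (1.1b) against any viscosity solution. Within this shared skeleton, however, you leave a genuine gap at the one place where the paper does its hardest work: the claim that $u_0$ is a viscosity solution of (1.1a). ``Stability of viscosity solutions under the $\s\to0$ limit of the discrete scheme'' is not a citable black box here: \eqref{de} is a stand-alone functional equation whose data $m_{\s_i}$ and whose constant $\bar L(\s_i,m_{\s_i})$ both move with $i$. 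The paper's Proposition \ref{hj} proves the needed convergence quantitatively: the consistency estimate $|h^m_\s(x,y)-\mathcal{L}_{\s,m}(x,y)|\leqslant \tilde C(D)\s^2$ for $|x-y|\leqslant \s D$ (Proposition \ref{pr1}, which in turn rests on the a priori velocity bounds of Lemma \ref{es}), upgraded via Propositions \ref{pr4} and \ref{pr5} to the operator estimate $\|T^m_\s u-\mathfrak{T}^m_\s u\|_\infty\leqslant C\s^2$, and then iterated $N_i\approx t/\s_i$ times so that the accumulated error is $O(t\s_i)$; this yields $T^{m_0}_tu_0=u_0-tc(m_0)$ for all $t>0$, which by \eqref{5-2} is precisely the statement that $u_0$ is a viscosity solution. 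Even if you replaced this by a Barles--Souganidis monotone-scheme argument, the consistency computation (essentially Proposition \ref{pr1}) is unavoidable, and your proposal contains neither it nor the iteration that controls the error over a fixed time horizon.

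The second soft spot is your passage from $\s$-holonomy to closedness of $\mu_0$. The difference quotients $(\varphi(x+\s_iv)-\varphi(x))/\s_i$ are of size $\mathrm{Lip}(\varphi)|v|$ and converge to $vD\varphi(x)$ only locally uniformly, so exchanging this limit with $\eta_{\s_i,m_{\s_i}}\stackrel{w^*}{\longrightarrow}\mu_0$ requires $\sup_i\int_{\{|v|>R\}}|v|\,d\eta_{\s_i,m_{\s_i}}\to0$ as $R\to\infty$, i.e. uniform integrability of $|v|$; the bounded first moments you invoke give tightness but not this. The paper sidesteps the issue by first proving that all minimizing $\s$-holonomic measures, for all small $\s$ and all $m\in\p$, are supported in one fixed compact set (Proposition \ref{cmp}, via $\mathcal{M}_{\s}(L_m)\subset\mathcal{N}_{\s}(L_m,u_{\s,m})$ and the uniform Lipschitz bound on $u_{\s,m}$); that single statement also powers its closed-graph verification in part (i). Your route is repairable: the uniform action bound $\int L\,d\eta_{\s_i,m_{\s_i}}\leqslant \bar L(\s_i,m_{\s_i})+F_\infty$ together with superlinearity (L2) gives the required uniform integrability by a de la Vall\'ee-Poussin argument, but this needs to be stated, since it is exactly what lets the holonomy constraint survive the limit. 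Modulo these two repairs your proof coincides with the paper's; the one genuine and valid variation is that you identify $\mu_0$ as a Mather measure by lower semicontinuity plus the closed-measure characterization of $-c(m_0)$, where the paper computes $\int L_{m_0}\,d\mu_0=-c(m_0)$ directly using the common compact support.
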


%

\begin{re} Outline of the proof of Theorem \ref{ma}:
\begin{itemize}
	\item [(i)] First, we discretize the continuous Lax-Oleinik equation \eqref{5-2} by discretizing in time. Analyzing the properties of solutions to the discrete Lax-Oleinik equation \eqref{de} is our starting point. Our discrete scheme is the mean field games analogue of the approximation scheme for Hamilton-Jacobi equations $H(x,Du)=c(H)$ considered in \cite{GT,ST}, where $c(H)$ is the Ma\~n\'e critical value of $H$.
	\item [(ii)] Next, we study the tightness of minimizing $\s$-holonomic measures for $L_m$ and
	 introduce the notion of minimizing $\s$-holonomic measures for mean field games. Based on the tightness result we get the existence of minimizing $\s$-holonomic measures for mean field games by using Kakutani fixed point theorem.
	\item [(iii)] At last, we get a convergent subsequence $(u_{\s_i,m_{\s_i}},m_{\s_i})$ whose limit is a solution of \eqref{lab1}. Theorem \ref{ma} can be regarded as a selection type result for \eqref{lab1}.
\end{itemize}
\end{re}

\subsection{Notations and definitions}
Now we introduce the symbols used in this paper.
Denote by $\mathbb{N}$ the set of positive integers, by $\mathbb{R}^{d}$ the $d$-dimensional real Euclidean space, by $\langle p, v\rangle$ or $pv$ the Euclidean scalar product of $p$ and $v$, by $|\cdot|$ the usual norm in $\mathbb{R}^{d}$, and by $B_{R}$ the open ball with center 0 and radius $R$. Let $\Omega \subset \T^d\times\mathbb{R}^{d}$. $\mathrm{cl}({\Omega})$ stands for its closure. We identify the tangent bundle $T\T^d$ and the cotangent bundle $T^*\T^d$ with $\T^d\times\R^d$.
 $C^{k}\left(\T^{d}\right)(k \in \mathbb{N})$ stands for the function space of $k$-times continuously differentiable functions on $\mathbb{T}^{d}$, and $C^{\infty}\left(\mathbb{T}^{d}\right):=\bigcap_{k=0}^{\infty} C^{k}\left(\mathbb{T}^{d}\right)$.
The spatial gradient of $F$ is denoted by $D_x F=\frac{\partial F}{\partial x}=\left(D_{x_{1}} F, \ldots, D_{x_{n}} F\right)$, where $D_{x_{i}} F=\frac{\partial F}{\partial x_{i}}, i=1,2, \ldots, d$. Given a metric space $(X, d)$ we denote by $\mathcal{B}(X)$ the Borel $\sigma$-algebra on $X$ and by $\mathcal{P}(X)$ the set of Borel probability measures on $(X, \mathcal{B}(X))$. The support of a measure $\mu \in \mathcal{P}(X)$, denoted by  $\operatorname{supp}(\mu)$, is the closed set defined by
$$
\operatorname{supp}(\mu):=\left\{x \in X: \mu\left(V_{x}\right)>0 \text { for each open neighborhood } V_{x} \text { of } x\right\}.
$$

 Let $X$ be a Polish space (complete, separable metric spaces, equipped with their Borel σ-algebra) endowed with a distance $d$. As mentioned above, we denote by $\mathcal{P}(X)$ the space of Borel probability measures.  $\mu_{k}\in \mathcal{P}(X)$ converges weakly to $\mu$ if for all $\varphi \in C_{b}(X)$ (i.e., $\varphi$ is bounded and continuous), $\int_X \varphi d \mu_{k}$ converges to $\int_X \varphi d \mu$ as $k \rightarrow +\infty$. This defines a separable, Hausdorff topology on $P(X)$, called the weak topology.

	Prokhorov theorem (see, for instance, \cite{V-b}) ensures that a subset $S$ of $\mathcal{P}(X)$ is relatively weakly compact if and only if it is tight, i.e. for all $\varepsilon>0$ there is a compact subset $K_{\varepsilon}$ of $X$ such that for all $\mu \in S, \mu(X\backslash K_{\varepsilon})\leq \varepsilon$.

	If $X$ is locally compact, then Riesz theorem identifies the space $M(X)$ of measures, normed by total variation, with the dual of the space $C_{0}(X)$ of continuous functions going to 0 at infinity. Then one can introduce the ``weak-* topology" on $P(X)$. At the level of probability measures, weak and weak-* convergences are  equivalent.

	Let $p \geq 0$ be a nonnegative real number. Denote by $\mathcal{P}_{p}(X)$ the set of probability measures with finite moments of order $p$, i.e. those measures $\mu$ such that for some (and thus any) $x_{0} \in X$,
	$$
	\int d\left(x_{0}, x\right)^{p} d \mu(x)<+\infty.
	$$
	If $d$ is bounded, then $\mathcal{P}_{p}(X)$ coincides with  $\mathcal{P}(X)$. Given $\mu$, $\nu\in \mathcal{P}_{p}(X)$, those probability measures $\pi\in\mathcal{P}(X\times X)$ that satisfy  
	\begin{align}\label{6-5}
		\pi(A \times X)=\mu(A), \quad \pi(X \times A)=\nu(A)
	\end{align}
	for all measurable subsets $A$ of $X$, are said to have marginals $\mu$ and $\nu$. Let $\Pi(\mu, \nu):=\{\pi \in P(X \times X): \eqref{6-5}\ \text{holds for all measurable}\ A\}$.
	Define the Monge-Kantorovich distance of order $p$ between $\mu$ and $\nu$  by
	\[
	d_{p}(\mu, \nu)=\left(\inf _{\pi \in \Pi(\mu, \nu)} \int_{X\times X} d(x, y)^{p} d \pi(x, y)\right)^{1 / p}.
	\]
	The Monge-Kantorovich distance of order $1$ will be also  called the Kantorovich-Rubinstein distance.

	Let us recall a very useful fact (see, for example, \cite{V-b}): let $p \in(0, +\infty)$, let $\left(\mu_{k}\right)_{k \in \mathbb{N}}$ be a sequence of probability measures in $\mathcal{P}_{p}(X)$, and let $\mu \in \mathcal{P}(X)$. Then, the following two statements are equivalent:
	(i) $d_{p}\left(\mu_{k}, \mu\right) \to 0$, as $k\to+\infty$
	(ii) $\mu_{k}$ converges weakly to $\mu$ as $k\to+\infty$, and $\mu_{k}$ satisfies the  tightness condition: for some (and thus any) $x_{0} \in X$,
	$$
	\lim _{R \rightarrow +\infty} \limsup _{k \rightarrow +\infty} \int_{d\left(x_{0}, x\right) \geq R} d\left(x_{0}, x\right)^{p} d \mu_{k}(x)=0.
	$$

The rest of the paper is organized as follows.  We provide some preliminary results in Section 2. Section 3
is devoted to the existence of minimizing holonomic measures for mean field games.
We show the convergence of the approximation scheme and that  the limit functions are solutions of \eqref{lab1} in Sections 4 and 5.

\section{A priori estimates}
In this part we provide some preliminary results. These results can be regarded as mean field games analogues of a priori extimates for Hamilton-Jacobi equations without the coupling term considered in \cite{GT,ST}. For completeness sake, we prove our versions here. The key point is that the estimates are uniform on $m\in\p$.

\begin{lem}\label{es}
	For each $D>0$, there is $C(D)>0$ such that for each $0<\s<1$, each $m\in\p$, each $x$, $y\in\R^d$ with $|x-y|\leqslant \s D$, and each minimizing curve $\g^m_{x,y}:[0,\s]\to\R^d$ of $L_m$ with the action 
	$h_\s^m(x,y)$,
	there hold
	\[
	|\dot{\g}^m_{x,y}(s)|,\ |\ddot{\g}^m_{x,y}(s)|\leqslant C(D),\quad \forall s\in[0,\s].
	\]
\end{lem}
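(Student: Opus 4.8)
The plan is to exploit that $L_m=L+F(\cdot,m)$ differs from the fixed Tonelli Lagrangian $L$ only by an $x$-dependent term which, together with its $x$-gradient, is bounded uniformly in $m$ by the constant $F_\infty$ of (F1). Every estimate produced for $L$ then transfers to $L_m$ with an error controlled by $F_\infty$ alone, and this is precisely what yields uniformity in $m$. The argument splits into three pieces: an $L^1$ bound on $|\dot\gamma^m_{x,y}|$ from minimality and superlinearity, an upgrade to a pointwise velocity bound via conservation of energy, and finally a pointwise acceleration bound from the Euler--Lagrange equation.

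First I would bound the action from above by comparing the minimizer $\gamma:=\gamma^m_{x,y}$ with the affine path $\sigma(s)=x+\frac{s}{\tau}(y-x)$, whose speed is $|y-x|/\tau\leqslant D$. Since $L$ is continuous and $|F|\leqslant F_\infty$, minimality gives $\int_0^\tau L_m(\gamma,\dot\gamma)\,ds\leqslant \tau\big(\max_{\T^d\times \bar B_D}L+F_\infty\big)$. On the other hand (L2) with $K=1$ yields $L_m(\gamma,\dot\gamma)\geqslant|\dot\gamma|+C(1)-F_\infty$, and integrating and combining the two inequalities produces $\int_0^\tau|\dot\gamma|\,ds\leqslant\tau\,M(D)$ for an explicit $M(D)$ depending only on $D$ and the structural constants. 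In particular there is a time $s_0\in[0,\tau]$ with $|\dot\gamma(s_0)|\leqslant M(D)$.

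The one genuinely nontrivial step is upgrading this averaged bound to a uniform pointwise one. Because $L_m$ is autonomous, the energy $E:=\partial_vL(\gamma,\dot\gamma)\cdot\dot\gamma-L_m(\gamma,\dot\gamma)$ is constant along the (necessarily $C^2$) minimizer. Evaluating at $s_0$, where $(\gamma(s_0),\dot\gamma(s_0))\in\T^d\times\bar B_{M(D)}$, bounds $|E|$ in terms of $D$ and $F_\infty$ only. The key point is that the energy function $\tilde E(x,v):=\partial_vL(x,v)\cdot v-L(x,v)$ is coercive in $v$ uniformly in $x\in\T^d$: this is a standard consequence of the Tonelli conditions, provable from the supergradient inequality $\tilde E(x,v)\geqslant\partial_vL(x,v)\cdot w-L(x,w)$ together with superlinearity (taking $w$ along $\partial_vL(x,v)$ bounds $|\partial_vL(x,v)|$, and then the Legendre diffeomorphism bounds $|v|$). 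Since $\tilde E(\gamma(s),\dot\gamma(s))=E+F(\gamma(s),m)$ stays bounded by a constant depending only on $D$ and $F_\infty$, coercivity forces $|\dot\gamma(s)|\leqslant C_1(D)$ for all $s\in[0,\tau]$, uniformly in $m$ and $\tau$.

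Finally, the acceleration bound is routine. Writing the Euler--Lagrange equation for $L_m$ and using $\partial_vL_m=\partial_vL$ and $\partial_xL_m=\partial_xL+D_xF$, I solve for $\ddot\gamma=[\partial^2_{vv}L(\gamma,\dot\gamma)]^{-1}\big(\partial_xL(\gamma,\dot\gamma)+D_xF(\gamma,m)-\partial^2_{xv}L(\gamma,\dot\gamma)\,\dot\gamma\big)$. On the compact set $\T^d\times\bar B_{C_1(D)}$ the Hessian $\partial^2_{vv}L$ is positive definite by (L1) with uniformly bounded inverse, $\partial_xL$ and $\partial^2_{xv}L$ are bounded, and $|D_xF(\gamma,m)|\leqslant F_\infty$ by (F1); hence $|\ddot\gamma(s)|\leqslant C_2(D)$. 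Taking $C(D):=\max\{C_1(D),C_2(D)\}$ completes the proof, every constant depending only on $D$ and the structural constants, never on $m$ or $\tau$.
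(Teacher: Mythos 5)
Your proof is correct, and while it opens and closes exactly as the paper's does, the key middle step runs on a genuinely different mechanism. Both arguments begin by comparing the minimizer with the affine segment of speed at most $D$, bounding the action by $\tau C_1(D)$, and both end by solving the Euler--Lagrange equation for $\ddot\gamma$ on a compact set (your displayed formula is in fact the corrected version of the paper's, which omits the factor $\dot\gamma$ multiplying $\partial^2_{xv}L$). For the crucial pointwise velocity bound, however, the paper argues by contradiction through the flow: superlinearity gives $R$ with $L_m(x,v)+F(x,m)>C_1(D)$ for $|v|>R$, and then compactness of $\Sigma_R$ and of $\mathcal{P}(\mathbb{T}^d)$, together with continuous dependence of solutions of the Euler--Lagrange equation on initial conditions and on the parameter $m$, produces $R_1$ with $\Phi^{L_m}_s(\Sigma_R)\subset\Sigma_{R_1}$ for $|s|\leqslant 1$, so that a minimizer ever exceeding speed $R_1$ never has speed $\leqslant R$ and hence costs too much. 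You instead extract an $L^1$ bound $\int_0^\tau|\dot\gamma|\,ds\leqslant\tau M(D)$, locate a good time $s_0$, and propagate the bound by conservation of energy plus uniform coercivity of $\tilde E(x,v)=\partial_vL(x,v)\cdot v-L(x,v)$ --- the classical Tonelli a priori compactness argument. Each route buys something: yours derives the uniformity in $m$ purely from the bounds $\|F(\cdot,m)\|_\infty,\ \|D_xF(\cdot,m)\|_\infty\leqslant F_\infty$ in (F1), with no appeal to compactness of $\mathcal{P}(\mathbb{T}^d)$ or to continuity of the Euler--Lagrange flow in the measure variable (a point where the paper's argument implicitly also leans on (F2), since the vector field involves $D_xF(\cdot,m)$); the paper's route avoids the Legendre-transform and energy-coercivity machinery, needing only continuous dependence of ODE solutions on data. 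Your version is arguably the more robust of the two for this mean-field setting, precisely because the measure enters only through the two uniform constants.
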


\begin{proof}
	Fix $D>0$. For each $0<\s<1$,  each $x$, $y\in\R^d$ with $|x-y|\leqslant \s D$, let $\ell_{x,y}$ be a segment connecting $x$ and $y$
	\[
	\ell_{x,y}:[0,\s]\to\R^d,\quad \ell_{x,y}(s):=x+s\frac{y-x}{\s}.
	\]
	Then for each $m\in\p$,
\begin{align*}
	\int_0^\s \Big(L(\ell_{x,y}(s),\dot{\ell}_{x,y}(s))+F(\ell_{x,y}(s),m)\Big)ds& =\int_0^\s\Big(L(\ell_{x,y}(s),\frac{y-x}{\s})+F(\ell_{x,y}(s),m)\Big)ds\\
	&\leqslant \s\Big(\max_{x\in\T^d,|v|\leqslant D}L(x,v)+F_\infty\Big)\\
	&=:\s C_1(D).
\end{align*}

Since $L$ is superlinear in $v$, then there is $R>0$ such that
for any $v\in\R^d$ with $|v|>R$,
\[
L(x,v)+F(x,m)>C_1(D), \quad \forall x\in\T^d,\ \forall m\in\p.
\] 	
Let
\[
\Sigma_R:=\{(x,v)\in\T^d\times\R^d: |v|\leqslant R\}.
\]	
Obviously, $\Sigma_R$ is a compact subset of $\T^d\times\R^d$.  By the compactness of $\Sigma_R$ and $\p$, the continuous dependence of the solutions on the initial condition and a parameter and (F1), one can deduce that there is  $R_1>0$ independent of $\s$ and $m$ such that
\[
\Phi^{L_m}_s(\Sigma_R)\subset \Sigma_{R_1}:=\{(x,v)\in\T^d\times\R^d: |v|\leqslant R_1\}
\]
for all $s\in[-1,1]$ and all $m\in\p$.

For any minimizing curve $\g^m_{x,y}:[0,\s]\to\R^d$ of $L_m$ with the action $h_\s^m(x,y)$, we assert that $|\dot{\g}^m_{x,y}(s)|\leqslant R_1$ for all $s\in[0,\s]$. Otherwise, there would be $s_0\in[0,\s]$ such that $|\dot{\g}^m_{x,y}(s_0)|> R_1$. We define a curve $\tilde{\g}$ in $\T^d$ by $\tilde{\g}:=\pi \g^m_{x,y}$. Since  $\g^m_{x,y}$ is a minimizing curve, then we know that $(\tilde{\g}(s),\dot{\tilde{\g}}(s))\subset \T^d\times\R^d$ is a solution of the Lagragian system generated by $L_m$. In view of $|\dot{\tilde{\g}}(s_0)|=|\dot{\g}^m_{x,y}(s_0)|>R_1$, one can deduce that
\[
(\tilde{\g}(s),\dot{\tilde{\g}}(s))\notin\Sigma_{R}\quad \forall s\in[0,\s].
\]
So,
$$|\dot{\tilde{\g}}(s)|=|\dot{\g}^m_{x,y}(s)|>R$$ for all $s\in[0,\s]$. Thus, we have that
\[
L(\g^m_{x,y}(s),\dot{\g}^m_{x,y}(s))+F(\g^m_{x,y}(s),m)>C_1(D), \quad \forall s\in[0,\s]
\]	
implying that
\[
\int_0^\s L(\g^m_{x,y}(s),\dot{\g}^m_{x,y}(s))+F(\g^m_{x,y}(s),m)ds>C_1(D)\s\geqslant \int_0^\s \Big(L(\ell_{x,y}(s),\dot{\ell}_{x,y}(s))+F(\ell_{x,y}(s),m)\Big)ds,
\]	
a contradiction.

At last, note that
\[
\ddot{\g}^m_{x,y}=\frac{\partial^2 L}{\partial v^2}(\g^m_{x,y},\dot{\g}^m_{x,y})^{-1}\Big(\frac{\partial L}{\partial x}(\g^m_{x,y},\dot{\g}^m_{x,y})+\frac{\partial F}{\partial x}(\g^m_{x,y},m)- \frac{\partial^2 L}{\partial x\partial v}(\g^m_{x,y},\dot{\g}^m_{x,y})\Big),
\]
which finishes the proof.
	
\end{proof}

\begin{prop}\label{pr1}
For each $D>0$, there is $\tilde{C}(D)>0$ such that if $\s\in(0,1]$, $x$, $y\in\R^d$ with $|x-y|\leqslant \s D$, then
\[
|h_\s^m(x,y)-\mathcal{L}_{\tau,m}(x,y)|\leqslant \s^2\tilde{C}(D),\quad \forall m\in\p.
\]
\end{prop}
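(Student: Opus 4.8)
The plan is to prove the two one-sided inequalities separately, each carrying an error of order $\s^2$, and then take $\tilde C(D)$ to be the larger of the two constants. Throughout, the point is that all constants can be chosen uniformly in $m\in\p$ and $\s\in(0,1]$.

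For the upper bound $h_\s^m(x,y)\leqslant \mathcal{L}_{\s,m}(x,y)+\s^2 C$, I would simply test the variational problem defining $h_\s^m$ against the straight segment $\ell_{x,y}$ from Lemma \ref{es}, which joins $x$ to $y$ in time $\s$ with constant velocity $(y-x)/\s$ of norm at most $D$, and which satisfies $|\ell_{x,y}(s)-x|\leqslant \s D$. Since $L$ is $C^2$, it is Lipschitz in $x$ on the compact slab $\T^d\times\{|v|\leqslant D\}$; combined with the bound $\|D_xF(\cdot,m)\|_\infty\leqslant F_\infty$ from (F1), both $L(\ell_{x,y}(s),(y-x)/\s)$ and $F(\ell_{x,y}(s),m)$ differ from their values at the base point $x$ by at most a constant times $\s D$. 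Integrating this difference over $[0,\s]$ yields the $\s^2$ error, so that $h_\s^m(x,y)\leqslant\int_0^\s L_m(\ell_{x,y},\dot\ell_{x,y})\,ds\leqslant\mathcal{L}_{\s,m}(x,y)+\s^2 C$.

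For the lower bound I would fix a minimizing curve $\g:=\g^m_{x,y}$ realizing $h_\s^m(x,y)$ and invoke Lemma \ref{es} to get $|\dot\g(s)|\leqslant C(D)$, whence $|\g(s)-x|\leqslant\int_0^s|\dot\g|\leqslant C(D)\s$. Freezing the base point as before, this time on the slab $\T^d\times\{|v|\leqslant C(D)\}$, gives $L(\g(s),\dot\g(s))\geqslant L(x,\dot\g(s))-C'\s$ and $F(\g(s),m)\geqslant F(x,m)-F_\infty C(D)\s$. The decisive step is then Jensen's inequality for the convex map $v\mapsto L(x,v)$ (convexity from (L1)): since $\frac1\s\int_0^\s\dot\g\,ds=(y-x)/\s$, we obtain $\frac1\s\int_0^\s L(x,\dot\g(s))\,ds\geqslant L\big(x,(y-x)/\s\big)$. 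Putting these together,
\[
h_\s^m(x,y)=\int_0^\s L_m(\g,\dot\g)\,ds\geqslant \s L\big(x,\tfrac{y-x}{\s}\big)+\s F(x,m)-\s^2 C''=\mathcal{L}_{\s,m}(x,y)-\s^2 C''.
\]

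I expect the lower bound to be the main obstacle. The upper bound is a direct competitor estimate, but the lower bound must control an \emph{arbitrary} minimizer, and this is exactly where the uniform velocity bound of Lemma \ref{es} is needed to keep the base-point-freezing error $O(\s^2)$ uniformly in $m$ and $\s$, while strict convexity (L1) enters through Jensen to recover the frozen straight-line action $\s L(x,(y-x)/\s)$. Combining the two inequalities and setting $\tilde C(D):=\max\{C,C''\}$ finishes the proof.
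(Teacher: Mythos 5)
Your proof is correct, and it reaches the estimate by a genuinely different route than the paper. The paper does not split the estimate into two one-sided inequalities: it fixes a minimizing curve $\g^m_{x,y}$ and uses \emph{both} conclusions of Lemma \ref{es}, the velocity bound \emph{and} the acceleration bound. The acceleration bound is what gives the paper the pointwise estimate $|\dot{\g}^m_{x,y}(s)-\frac{y-x}{\s}|\leqslant 2\s C(D)$; together with $|\g^m_{x,y}(s)-x|\leqslant \s C(D)$ this makes the integrand $L(\g^m_{x,y},\dot{\g}^m_{x,y})+F(\g^m_{x,y},m)$ pointwise $O(\s)$-close to the frozen value $L(x,\frac{y-x}{\s})+F(x,m)$, and integrating over $[0,\s]$ yields both directions of the inequality simultaneously. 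You instead get the upper bound from the straight-line competitor and the lower bound by freezing the base point and applying Jensen's inequality to the convex map $v\mapsto L(x,v)$, so you use only the velocity half of Lemma \ref{es}: convexity (L1) substitutes for the acceleration estimate, since Jensen lets you pass from the average of $L(x,\dot{\g}^m_{x,y}(s))$ to $L$ evaluated at the average velocity $\frac{y-x}{\s}$ without knowing that $\dot{\g}^m_{x,y}$ is pointwise close to that average. This is the classical decomposition for comparing continuous and discrete actions, and it is more economical in what it demands of the minimizer (no second-order information); the paper's argument, in exchange, produces the stronger pointwise statement that minimizers of short actions have nearly constant velocity, which is of independent interest. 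In both arguments the uniformity in $m\in\p$ comes from the same sources: the constant $C(D)$ of Lemma \ref{es} is independent of $m$ and $\s$, the Lipschitz constant of $L$ in $x$ on a compact slab $\T^d\times\{|v|\leqslant R\}$ depends only on $R$, and (F1) bounds $\|D_xF(\cdot,m)\|_\infty$ by $F_\infty$ uniformly in $m$.
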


\begin{proof}
Fix $D>0$. Let $C(D)$ be the constant given by Lemma \ref{es}. Let $\s\in(0,1]$ and $x$, $y\in\R^d$ with $|x-y|\leqslant \s D$.
Let $\g^m_{x,y}$ be a minimizing curve $\g^m_{x,y}:[0,\s]\to\R^d$ of $L_m$ with the action $h_\s^m(x,y)$. Then by Lemma \ref{es}, we get that $|\dot{\g}^m_{x,y}(s)|$, $|\ddot{\g}^m_{x,y}(s)|\leqslant C(D)$ for all $s\in[0,\s]$.
For any $s\in[0,\s]$, we have that
\begin{align*}
	& |\g^m_{x,y}(s)-x|=|\g^m_{x,y}(s)-\g^m_{x,y}(0)|\leqslant \s C(D),\quad |\dot{\g}^m_{x,y}(s)-\dot{\g}^m_{x,y}(0)|\leqslant \s C(D),\\[2mm]
	& \Big|\frac{y-x}{\s}-\dot{\g}^m_{x,y}(0)\Big|=\Big|\frac{\g^m_{x,y}(\s)-\g^m_{x,y}(0)-\dot{\g}^m_{x,y}(0)\s}{\s}\Big|\leqslant \s C(D),\\[2mm]
	& |\dot{\g}^m_{x,y}(s)-\frac{y-x}{\s}|\leqslant |\dot{\g}^m_{x,y}(s)-\dot{\g}^m_{x,y}(0)|+|\dot{\g}^m_{x,y}(0)-\frac{y-x}{\s}|\leqslant 2\s C(D).
\end{align*}
So, we get that
\begin{align*}
	|h_\s^m(x,y)-\mathcal{L}_{\tau,m}(x,y)|& \leqslant \int_0^\s\Big|L(\g^m_{x,y}(s),\dot{\g}^m_{x,y}(s))+F(\g^m_{x,y}(s),m)-L(x,\frac{y-x}{\s})-F(x,m)\Big|ds\\
	& \leqslant \int_0^\s\Big|L(\g^m_{x,y}(s),\dot{\g}^m_{x,y}(s))-L(x,\frac{y-x}{\s})\Big|+\Big|F(\g^m_{x,y}(s),m)-F(x,m)\Big|ds\\
	& \leqslant C_2(D)\s^2+F_\infty C(D)\s^2=: \tilde{C}(D)\s^2.
\end{align*}
	
\end{proof}

We use the symbol $A^m_\s(x,y)$ to denote $h^m_\s(x,y)$ or $\mathcal{L}_{\s,m}(x,y)$ in the following four  propositions, which means these results hold for both  $h^m_\s(x,y)$ and $\mathcal{L}_{\s,m}(x,y)$.
 The first one is a direct consequence of assumptions (L1), (L2), (F1) and Lemma \ref{cmp}. We omit the proof here.
\begin{prop}\label{pro}
	$A^m_\s(x,y)$ satisfies the following properties:
	\begin{itemize}
		\item [(i)]  for each $D>0$,
		$$
		\inf_{m\in\p}\inf _{\tau \in(0,1]} \inf_{x, y \in \mathbb{R}^{d}} \frac{1}{\tau} A^m_{\tau}(x, y)>-\infty, \quad \sup_{m\in\p}\sup_{\tau \in(0,1]} \sup_{|y-x|\leqslant \tau D} \frac{1}{\tau} A^m_{\tau}(x, y)<+\infty;
		$$
		\item [(ii)]
		$$
		\lim _{D \rightarrow+\infty} \inf _{\tau \in(0,1]} \inf _{|x-y| \geqslant \tau D} \frac{A^m_{\tau}(x, y)}{|x-y|}=+\infty
		$$
		uniformly on $m\in\p$;
		\item [(iii)]  for each $D>0$, there exists a constant $C(D)>0$ such that for each $\tau \in(0,1]$, for each $x, y, z \in \mathbb{R}^{d}$, and each $m\in\p$,
		\begin{itemize}
			\item [(iii')]	if $|y-x| \leqslant \tau D$ and $|z-x| \leqslant \tau D$, then
			$
				\left|A^m_{\tau}(x, z)-A^m_{\tau}(x, y)\right| \leqslant C(D)|z-y|, $
			\item [(iii'')] if $|z-x| \leqslant \tau D$ and $|z-y| \leqslant \tau D$,  then
			$
			\left|A^m_{\tau}(x,z)-A^m_{\tau}(y,z)\right| \leqslant C(D)|y-x|.
			$
		\end{itemize}
	\end{itemize}
\end{prop}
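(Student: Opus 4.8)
The plan is to prove the three properties separately for the explicit discrete action $\mathcal{L}_{\tau,m}(x,y)=\tau\big(L(x,\frac{y-x}{\tau})+F(x,m)\big)$ and for the minimal action $h^m_\tau$, since the former reduces to elementary manipulations of $L$ and $F$ while the latter is handled through the a priori bounds on minimizing curves supplied by Lemma \ref{es}. Throughout I write $v:=\frac{y-x}{\tau}$, so that $\frac1\tau\mathcal{L}_{\tau,m}(x,y)=L(x,v)+F(x,m)$, with $|v|\leq D$ precisely when $|x-y|\leq\tau D$.

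First, properties (i) and (ii). For $\mathcal{L}_{\tau,m}$ the lower bound in (i) is immediate from superlinearity (L2) (with a fixed $K>0$, which gives $L\geq C(K)$) together with $F\geq-F_\infty$ from (F1), so $\frac1\tau\mathcal{L}_{\tau,m}\geq C(K)-F_\infty$ independently of $\tau,m,x,y$; on the set $|x-y|\leq\tau D$ one has $|v|\leq D$, hence $\frac1\tau\mathcal{L}_{\tau,m}\leq\max_{\T^d\times\{|w|\leq D\}}L+F_\infty$, giving the upper bound, again uniformly. For (ii), when $|x-y|\geq\tau D$ we have $|v|\geq D$, and (L2) with arbitrary $K$ yields $\frac{\mathcal{L}_{\tau,m}(x,y)}{|x-y|}=\frac{L(x,v)+F(x,m)}{|v|}\geq K-\frac{|C(K)|+F_\infty}{D}$; for fixed $K$ this tends to $K$ as $D\to\infty$, and $K$ is arbitrary, so the limit is $+\infty$ uniformly in $m$. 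For $h^m_\tau$ the lower bound and the coercivity are obtained by the same computation, after inserting (L2) into the action integral and using $\int_0^\tau|\dot\gamma|\,ds\geq|y-x|$, while the upper bound follows by testing with the straight segment $\ell_{x,y}$, exactly as in the first estimate of Lemma \ref{es}.

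Next, the Lipschitz estimates (iii). For $\mathcal{L}_{\tau,m}$ they are a mean-value computation on a compact velocity ball: in (iii') the $F$-term cancels and $|L(x,\frac{z-x}{\tau})-L(x,\frac{y-x}{\tau})|\leq C(D)\frac{|z-y|}{\tau}$, so multiplying by $\tau$ gives $C(D)|z-y|$; in (iii'') one splits the difference into the position slot, the velocity slot and the $F$-term, bounding them by $\|D_xL\|$, $\|D_vL\|$ on a fixed velocity ball and by $\|D_xF(\cdot,m)\|_\infty\leq F_\infty$, the factor $\tau\leq1$ absorbing the $\frac1\tau$ coming from the velocity difference. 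For $h^m_\tau$ I would argue variationally: letting $\gamma$ realize $h^m_\tau(x,y)$, the competitor $\sigma(s):=\gamma(s)+\frac{s}{\tau}(z-y)$ joins $x$ to $z$, and Lemma \ref{es} gives $|\dot\gamma|\leq C(D)$ while $|\frac{z-y}{\tau}|\leq2D$, so $\sigma$ has velocities in a fixed ball; estimating $L(\sigma,\dot\sigma)-L(\gamma,\dot\gamma)$ and $F(\sigma,m)-F(\gamma,m)$ by the Lipschitz constants of $L,F$ there yields $h^m_\tau(x,z)-h^m_\tau(x,y)\leq C(D)|z-y|$, and swapping $y,z$ gives (iii'); the reparametrization $\sigma(s):=\gamma(s)+(1-\frac{s}{\tau})(x-y)$ about a minimizer from $y$ to $z$ gives (iii'') in the same way.

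The one genuinely delicate point is (iii) for $h^m_\tau$. It is tempting to transfer it from $\mathcal{L}_{\tau,m}$ through Proposition \ref{pr1}, but that comparison carries an additive error of order $\tau^2$ that is not controlled by $|z-y|$ — it persists even as $z\to y$ — so it cannot produce a Lipschitz bound, and the variational construction above, resting on the uniform velocity and acceleration bounds of Lemma \ref{es}, is needed instead. The only thing to verify carefully there is that the modified curve $\sigma$ keeps its velocity inside a compact set on which $L$ is uniformly Lipschitz, uniformly in $\tau\in(0,1]$ and $m\in\p$. Since every constant above is produced either by Lemma \ref{es} (depending only on $D$) or by the uniform bound $F_\infty$, all the estimates are automatically uniform in $m\in\p$, which is the uniformity feature stressed just before the statement.
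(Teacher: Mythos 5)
Your proof is correct, and it takes exactly the route the paper intends: the paper actually omits the proof of this proposition, saying only that it is ``a direct consequence of assumptions (L1), (L2), (F1)'' and the a priori bounds on minimizing curves, and your argument is precisely that route carried out in full --- elementary pointwise estimates on $L$ and $F$ for $\mathcal{L}_{\tau,m}$, and competitor curves built by linearly deforming a minimizer (whose velocity is controlled by Lemma \ref{es}, so the deformed curve stays in a fixed compact velocity ball) for $h^m_\tau$, with all constants depending only on $D$ and $F_\infty$, hence uniform in $m\in\p$. Your observation that (iii) for $h^m_\tau$ cannot be obtained from (iii) for $\mathcal{L}_{\tau,m}$ through the $O(\tau^2)$ comparison of Proposition \ref{pr1} --- since that additive error does not vanish as $|z-y|\to 0$ --- is accurate and correctly identifies why the variational construction is needed.
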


The following result comes from \cite{GT}, where the authors dealt with Hamilton-Jacobi equations without coupling term $F(x,m)$.
\begin{prop}
	\begin{itemize}
		\item [(i)] For each $\tau>0$ and each $m\in\p$, there exists a unique constant $\bar{A}^m_{\tau}$ such that equation
		\begin{align}\label{de4}
		u_{\s,m}(y)+\bar{A}^m_{\tau}=\inf_{x\in\R^d}\big(u_{\s,m}(x)+A^m_{\s}(x,y)\big), \quad  \forall y\in\R^d,
		\end{align}
		admits a continuous $\mathbb{Z}^d$-periodic solution $u_{\tau,m}.$
		\item [(ii)] $\bar{A}^m_{\tau}$ can be represented by
		\begin{align}\label{for}
			\bar{A}^m_{\tau}=\lim _{k \rightarrow+\infty} \inf _{z_{0}, \ldots, z_{k} \in \mathbb{R}^{d}} \frac{1}{k} \sum_{i=0}^{k-1} A^m_{\tau}\left(z_{i}, z_{i+1}\right).
		\end{align}
	\end{itemize}
\end{prop}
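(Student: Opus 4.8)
The plan is to recognize both parts as instances of the discrete weak KAM construction of \cite{GT}, the point being that properties (i)--(iii) of Proposition \ref{pro} are exactly the structural hypotheses needed there, and that they now hold uniformly in $m\in\p$. The central object is the discrete Lax--Oleinik operator $T=T^m_\s$ acting on $\mathbb{Z}^d$-periodic continuous functions by
\[
Tu(y):=\inf_{x\in\R^d}\big(u(x)+A^m_\s(x,y)\big),\qquad y\in\R^d.
\]
First I would check that $T$ is a well-defined self-map of $C(\T^d)$. The lower bound in Proposition \ref{pro}(i) keeps the infimum finite, the superlinear growth in (ii) forces it to be attained on a set $\{|x-y|\leqslant\s D_0\}$ (compare the competitor $x=y$, whose cost $A^m_\s(y,y)$ is $O(\s)$, against (ii)), and the estimate (iii$'$) then makes $Tu$ Lipschitz. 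Periodicity $A^m_\s(x+k,y+k)=A^m_\s(x,y)$ for $k\in\mathbb{Z}^d$, inherited from the $\mathbb{Z}^d$-periodicity of $L$ and $F$ in the space variable, shows $Tu$ is again $\mathbb{Z}^d$-periodic.

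Two elementary properties drive everything: $T(u+c)=Tu+c$ for constants $c$, and $u\leqslant v\Rightarrow Tu\leqslant Tv$; together they give that $T$ is nonexpansive, $\|Tu-Tv\|_\infty\leqslant\|u-v\|_\infty$. For the existence of the additive eigenpair in part (i), I would fix $\Lambda>0$ large and apply Schauder's theorem to the continuous map $\hat Tu:=Tu-\min_{\T^d}Tu$ on the compact convex set $K=\{u\in C(\T^d):\min_{\T^d}u=0,\ \mathrm{Lip}(u)\leqslant\Lambda\}$. For $u\in K$ the minimizer $x^\ast$ defining $Tu(y)$ has step size $|x^\ast-y|\leqslant\s D_0$ with $D_0$ depending only on $\mathrm{osc}(u)\leqslant\Lambda\sqrt d$ via (ii), so (iii$'$) bounds $\mathrm{Lip}(Tu)$ by $C(D_0)$, and the superlinear growth makes this grow slowly enough in $\Lambda$ that $C(D_0)\leqslant\Lambda$ for $\Lambda$ large; hence $\hat T(K)\subseteq K$. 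The fixed point $u_{\s,m}\in K$ satisfies $Tu_{\s,m}=u_{\s,m}+\bar A^m_\s$ with $\bar A^m_\s:=\min_{\T^d}Tu_{\s,m}$, which is precisely \eqref{de4}. Verifying that a single $\Lambda$ works \emph{uniformly in $m$} — using the uniform bound $F_\infty$ and the $m$-uniform constants of Proposition \ref{pro} — is the main obstacle, and it is what makes the whole construction uniform over $\p$.

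Finally, uniqueness of the constant and the representation \eqref{for} fall out together from nonexpansiveness. Iterating the eigen-relation gives $T^ku_{\s,m}=u_{\s,m}+k\bar A^m_\s$, while unwinding the infimum yields $T^k0(y)=\inf_{z_0,\dots,z_{k-1}}\sum_{i=0}^{k-1}A^m_\s(z_i,z_{i+1})$ with $z_k=y$. Nonexpansiveness gives $\|T^k0-T^ku_{\s,m}\|_\infty\leqslant\|u_{\s,m}\|_\infty$, so $T^k0(y)=u_{\s,m}(y)+k\bar A^m_\s+O(1)$ uniformly in $y$; taking the infimum over $y$, dividing by $k$ and letting $k\to\infty$ gives
\[
\lim_{k\to\infty}\inf_{z_0,\dots,z_k\in\R^d}\frac1k\sum_{i=0}^{k-1}A^m_\s(z_i,z_{i+1})=\bar A^m_\s,
\]
which is \eqref{for}. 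Since the right-hand limit does not refer to any particular solution, any admissible constant must equal it, proving uniqueness in (i) and establishing (ii) at once. All remaining steps are the bookkeeping needed to carry the uncoupled estimates of \cite{GT} over to the cost $A^m_\s$ carrying the bounded term $F(\cdot,m)$.
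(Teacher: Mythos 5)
The paper does not actually prove this proposition: it quotes it from \cite{GT} (see also \cite{ST}), the only new point being that the bounded coupling $F(\cdot,m)$ is absorbed into the Lagrangian, so your attempt has to stand on its own. The second half of your argument does: expanding $T^k0$ as an infimum over chains, using $T^ku_{\s,m}=u_{\s,m}+k\bar A^m_\s$ and nonexpansiveness to get $T^k0=u_{\s,m}+k\bar A^m_\s+O(1)$, and dividing by $k$ correctly yields \eqref{for}; since the right-hand side of \eqref{for} does not depend on the particular solution, uniqueness of the constant follows. That part is fine and is the standard argument.

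The existence half of (i), however, has a genuine gap exactly at the step you yourself flag as ``the main obstacle'': the Schauder invariance $\widehat{T}(K)\subseteq K$. Two problems. Minor one: your $K=\{u:\min_{\T^d}u=0,\ \mathrm{Lip}(u)\leqslant\Lambda\}$ is not convex (average two elements whose zero sets are disjoint and the minimum becomes strictly positive); this is repaired by normalizing at a fixed point, $u(x_0)=0$. Major one: you need $\mathrm{Lip}(Tu)\leqslant\Lambda$ whenever $\mathrm{Lip}(u)\leqslant\Lambda$, and you claim this because the step bound $D_\Lambda$ of Proposition \ref{pr5}(i) composed with the constant $C(\cdot)$ of Proposition \ref{pro}(iii') satisfies $C(D_\Lambda+1)\leqslant\Lambda$ for large $\Lambda$, ``by superlinearity''. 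This inequality is not proved and is false for the natural constants. Take $L(x,v)=|v|^2/2$, $F\equiv0$: then $\big(A^m_\s(x,y)-A^m_\s(y,y)\big)/|y-x|=|y-x|/(2\s)$, so $D_\kappa\approx 2\kappa$, while (iii') holds with $C(D)\approx\sup_{|v|\leqslant D}|\partial_vL|=D$, whence $C(D_\Lambda+1)\approx 2\Lambda+1>\Lambda$ for every $\Lambda$. Superlinearity in fact works against you: it forces $D_\Lambda\gtrsim\Lambda$, and then $C(D_\Lambda)\gtrsim\Lambda$ again. (In this quadratic example $\mathrm{Lip}(Tu)\leqslant\mathrm{Lip}(u)$ does happen to hold, but only via the translation trick $x^\ast\mapsto x^\ast+z-y$ available for translation-invariant costs; for $x$-dependent Tonelli $L$ plus $F$ that trick only gives $\mathrm{Lip}(Tu)\leqslant\Lambda+\s\sup_{|v|\leqslant D_\Lambda}\big(|\partial_xL|+|\partial_xF|\big)$, again not $\leqslant\Lambda$.) So no invariant compact convex set is actually exhibited, and the fixed point is not obtained. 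The standard ways to close this are: (a) the discounted approximation $u\mapsto\inf_{x}\big((1-\lambda)u(x)+A^m_\s(x,\cdot)\big)$, which is a $(1-\lambda)$-contraction on $C(\T^d)$; its fixed points obey a $\lambda$-uniform Lipschitz bound by the chain argument of Proposition \ref{pr4} (non-circular, since it uses the equation rather than an arbitrary input), and one lets $\lambda\to0$ --- essentially the route of \cite{GT,ST}; or (b) Schauder on the convex compact set of normalized sub-actions $\{u: u(x_0)=0,\ u(y)-u(x)\leqslant A^m_\s(x,y)-\bar A^m_\s\ \forall x,y\}$, with $\bar A^m_\s$ defined first by the subadditive limit \eqref{for}, which is invariant under $u\mapsto Tu-Tu(x_0)$ but whose nonemptiness requires the Ma\~n\'e-potential construction.
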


\begin{re}
	Let $\bar{\mathcal{L}}_{\s,m}:=\bar{A}^m_\s$ when $A^m_\s(x,y)=\mathcal{L}_{\s,m}(x,y)$. In view of \eqref{de} and \eqref{de4}, one can deduce that $\frac{\bar{\mathcal{L}}_{\s,m}}{\s}=\bar{L}(\s,m)$.
\end{re}

\begin{prop}\label{pr4}
	There exist constants $C$, $D>0$ such that if $\tau \in(0,1]$ and $m\in\p$ and $u_{\tau,m}$ is a solution of \eqref{de4}, then
	\begin{itemize}
		\item [(i)] $u_{\tau,m}$ is Lipschitz and $\operatorname{Lip}\left(u_{\tau,m}\right) \leqslant C$,
		\item [(ii)] $\forall y \in \mathbb{R}^{d}, x \in \arg \min _{x \in \mathbb{R}^{d}}\left\{u_{\tau}(x)+A^m_{\tau}(x, y)\right\} \Rightarrow |y-x| \leqslant \tau D$.
	\end{itemize}
	\end{prop}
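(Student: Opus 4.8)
The plan is to prove the two assertions in the order (i) then (ii), because an oscillation-free Lipschitz bound for (i) will feed directly into the velocity bound for (ii). Throughout write $u=u_{\s,m}$, $A=A^m_\s$, $\bar A=\bar A^m_\s$. The first preparatory step is to normalise the additive constant. Using the representation \eqref{for}, the constant chain $z_0=\cdots=z_k=z$ together with the upper estimate of Proposition \ref{pro}(i) (applicable since $|z-z|=0\le\s D$) gives $\bar A\le C_1\s$, while the uniform lower bound $\frac1\s A\ge -c_0$ from the same proposition forces $\bar A\ge -c_0\s$. Hence $|\bar A^m_\s|\le C_0\s$ with $C_0$ independent of $\s$ and $m$; equivalently $\frac{-\bar A^m_\s}{\s}$ is bounded above uniformly.

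For (i) I would avoid the circular route through locality and instead read off a uniform gradient bound from a discrete subsolution property. Let $\varphi\in C^1$ touch $u$ from above at $x_0$ (so $u-\varphi$ has a local maximum at $x_0$ and $u(x_0)=\varphi(x_0)$). Testing \eqref{de4} at the target $x_0$ with the competitor $x=x_0-\s w$ and using $u(x)\le\varphi(x)$ yields
\[
\frac{\varphi(x_0)-\varphi(x_0-\s w)}{\s}\le L\big(x_0-\s w,\,w\big)+F(x_0-\s w,m)-\frac{\bar A}{\s}
\]
for every $w$ with $x_0-\s w$ in the neighbourhood. Expanding the difference quotient as $D\varphi(x_0)\cdot w+O(\s|w|^2)$ and taking the supremum over $w$ in the relevant bounded range produces $H(x_0,D\varphi(x_0))\le F_\infty+C_0+O(\s)$, since $\sup_w\big(D\varphi(x_0)\cdot w-L(x_0,w)\big)=H(x_0,D\varphi(x_0))$ and $|F|\le F_\infty$. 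Because $H$ is superlinear in $p$ (a consequence of (L1),(L2)), its sublevel sets in $p$ are bounded uniformly in $x_0$, so $|D\varphi(x_0)|\le C$ with $C$ depending only on $F_\infty$ and $C_0$. As this holds at every touching point, $u$ is $C$-Lipschitz with $C$ independent of $\s$ and $m$, which is (i).

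For (ii) I would exploit the minimality of the argmin directly. Fix $y$ and let $x$ realise $\inf_w(u(w)+A(w,y))$. Perturbing $x\mapsto x+se$ along a unit vector and using $|u(x)-u(x+se)|\le C|s|$ from (i), the one-sided minimality inequalities give, after letting $s\to0^\pm$,
\[
\Big|D_vL\big(x,\tfrac{y-x}{\s}\big)\Big|\le C+\s\Big|D_xL\big(x,\tfrac{y-x}{\s}\big)\Big|+\s F_\infty .
\]
Thus the momentum at $x$ is bounded, and since $v\mapsto D_vL(x,v)$ is a homeomorphism whose norm tends to $+\infty$ with $|v|$ uniformly in $x$ (Tonelli convexity and superlinearity), this bounds $\big|\tfrac{y-x}{\s}\big|$ by a constant $D$, i.e.\ $|y-x|\le\s D$. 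To control the lower-order term $\s|D_xL|$ for a general Tonelli Lagrangian I would first record a crude velocity bound: combining the equality $A(x,y)=u(y)-u(x)+\bar A$ at the minimizer with a uniform oscillation estimate for $u$ (obtained by iterating the sub-optimality inequality along a straight chain of $\lceil|p-q|/\s\rceil$ steps and invoking Proposition \ref{pro}(i) and Step 1) and the superlinear lower bound of Proposition \ref{pro}(ii) to cap $\s\,D_xL(x,\tfrac{y-x}{\s})$ and close the estimate.

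The main obstacle is keeping every constant uniform in both $\s\in(0,1]$ and $m\in\p$ while the two assertions feed into one another. The decisive point that breaks the apparent circularity is that (i) is obtained \emph{without} locality, purely from the coercivity of $H$ and the bound $|\bar A^m_\s|\le C_0\s$; uniformity in $m$ is guaranteed by (F1), which makes $H_m=H-F(\cdot,m)$ coercive in $p$ with sublevel sets bounded independently of $m$. Once (i) is in hand, (ii) is a soft Legendre-transform argument, and the only technical care needed is that the $O(\s)$ errors and the $\s D_xL$ term stay under control on $\s\in(0,1]$.
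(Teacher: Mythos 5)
Your normalisation step, $|\bar A^m_\s|\leqslant C_0\s$, is correct (the paper uses it implicitly when asserting that its constants $C_1$, $D$, $C$ are well defined via \eqref{for} and Proposition \ref{pro}), but the core of your plan --- proving (i) first by a viscosity-touching argument --- has a genuine gap. Equation \eqref{de4} with a \emph{fixed} step $\s>0$ is not a local PDE and imposes no differential constraint at scales smaller than $\s$. Concretely: if $u-\varphi$ has a local maximum at $x_0$ on a ball of radius $r$ about $x_0$, your competitor inequality
\[
\frac{\varphi(x_0)-\varphi(x_0-\s w)}{\s}\leqslant L(x_0-\s w,w)+F(x_0-\s w,m)-\frac{\bar A^m_\s}{\s}
\]
is only available for $|w|<r/\s$, since outside that ball you have no comparison $u\leqslant\varphi$. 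The radius $r$ may be arbitrarily small compared with $\s$, and then the supremum over the admissible range does not produce $H(x_0,D\varphi(x_0))$: the maximising velocity $w^*=D_pH(x_0,D\varphi(x_0))$ need not lie in the range $|w|<r/\s$ (its size is precisely what you are trying to bound), and as $r\to0$ the restricted supremum yields no bound whatsoever on $|D\varphi(x_0)|$. Passing to global touching does not repair this, because keeping $u\leqslant\varphi$ globally forces you to enlarge $\varphi$ away from $x_0$, which only weakens the same inequality. Your argument is essentially the one used to show that the \emph{limit} $u_0$ as $\s\to0$ is a viscosity subsolution; it gives nothing for a fixed $\s\in(0,1]$, whereas Proposition \ref{pr4} demands constants uniform over all such $\s$. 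The paper's proof is purely discrete and proceeds in the opposite order: first a \emph{coarse} bound $u_{\tau,m}(y)-u_{\tau,m}(x)\leqslant C_1|y-x|$ valid only for $|y-x|>\tau$, obtained by chaining the subsolution inequality along $n\approx|y-x|/\tau$ equally spaced points (each step costs $O(\tau)$ and there are $O(|y-x|/\tau)$ steps, which is why the restriction $|y-x|>\tau$ is needed); then (ii), by playing this coarse bound against the superlinearity of $A^m_\tau$ at the argmin; and only then the full Lipschitz bound, from the envelope structure: if $x$ is the argmin for $y$ and $|z-y|\leqslant\tau$, then $u_{\tau,m}(z)-u_{\tau,m}(y)\leqslant A^m_\tau(x,z)-A^m_\tau(x,y)\leqslant C|z-y|$ by Proposition \ref{pro}(iii'). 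The fine-scale regularity of $u_{\tau,m}$ comes from the regularity of the kernel $A^m_\tau(x,\cdot)$, not from any local differential inequality; that is the mechanism your plan is missing.

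A secondary problem: your route to (ii) via the momentum bound $|D_vL(x,\frac{y-x}{\s})|\leqslant C+\s|D_xL(x,\frac{y-x}{\s})|+\s F_\infty$ requires controlling $\s|D_xL|$, and the ``crude velocity bound'' you propose does not deliver it. Oscillation of $u$ plus Proposition \ref{pro}(ii) gives only $|y-x|\leqslant \mathrm{const}$, i.e.\ $|\frac{y-x}{\s}|\leqslant \mathrm{const}/\s$, and for a general Tonelli Lagrangian $D_xL(x,v)$ may grow arbitrarily fast in $|v|$, so $\s|D_xL|$ at velocities of order $1/\s$ is uncontrolled. Once (i) is available no first-order condition is needed at all: at the argmin, $A^m_\tau(x,y)-\bar A^m_\tau=u_{\tau,m}(y)-u_{\tau,m}(x)\leqslant C|y-x|$, while Proposition \ref{pro}(ii) furnishes $D>1$ such that $A^m_\tau(x,y)-\bar A^m_\tau>(C+C_0)|y-x|$ whenever $|y-x|>\tau D$; the two are incompatible, so $|y-x|\leqslant\tau D$. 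This is exactly the paper's second step, run with its coarse constant $C_1$ in place of $C$, and it also keeps the argument valid simultaneously for both choices $A^m_\tau=h^m_\tau$ and $A^m_\tau=\mathcal{L}_{\tau,m}$, which the statement requires.
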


\begin{proof}
	Let
	$$
	\begin{aligned}
		C_{1} &:=2 \sup _{\tau \in(0,1],|y-x| \leqslant \tau,m\in\p} \frac{A^m_{\tau}(x, y)-\bar{A}^m_{\tau}}{\tau}, \\[1mm]
		D &:=\inf \left\{D'>1: \inf _{\tau \in(0,1],|y-x|>\tau D',m\in\p} \frac{A^m_{\tau}(x, y)-\bar{A}^m_{\tau}}{|y-x|}>C_{1}\right\}, \\[1mm]
		C &:=\max \left\{C_{1},\sup_{|y-x|,|z-x| \leqslant \tau(D+1),\s\in(0,1],m\in\p} \frac{A^m_{\tau}(x, y)-A^m_{\tau}(x, z)}{|z-y|}\right\}.
	\end{aligned}
	$$

Notice that the above three constants $C_{1}$, $D$ and $C$ are well defined since $A^m_\s(x,y)$ satisfies (i), (ii), (iii) in Proposition \ref{pro} and $\bar{A}^m_{\tau}$ has the representation formula \eqref{for}.

	First, we show if $|x-y|>\tau$, then  $u_{\tau,m}(y)-u_{\tau,m}(x) \leqslant C_{1}|y-x|$.
	In fact, by choosing $n \geqslant 2$ such that $(n-1) \tau<|y-x| \leqslant n \tau$ and by choosing $x_{i}=x+\frac{i}{n}(y-x)$, we obtain $n \tau \leqslant 2|y-x|$,
\begin{align*}
		u_{\tau,m}\left(x_{i+1}\right)-u_{\tau,m}\left(x_{i}\right) &\leqslant A^m_{\tau}\left(x_{i}, x_{i+1}\right)-\bar{A}^m_{\tau},  \\[2mm]
		u_{\tau,m}(y)-u_{\tau,m}(x) &\leqslant n \tau \sup _{|z-z'| \leqslant \s} \frac{A^m_{\tau}(z, z')-\bar{A}^m_{\tau}}{\tau} \leqslant C_{1}|y-x|.
\end{align*}
Second, we prove (ii). Let $y \in \mathbb{R}^{d}$ and take $x$ satisfying
	$$
	u_{\tau,m}(y)-u_{\tau,m}(x)=A^m_{\tau}(x, y)-\bar{A}^m_{\tau}.
	$$
	Assume by contradiction that $|y-x|>\tau D$. Then the first step of the proof may be used and we obtain that
	$$
	C_{1}|y-x| \geqslant u_{\tau,m}(y)-u_{\tau,m}(x)=A^m_{\tau}(x, y)-\bar{A}^m_{\tau}>C_{1}|y-x|,
	$$
	a contradiction.
	
	Third, we end the proof of (i). Let $y$, $z \in \mathbb{R}^{d}$ with $|z-y| \leqslant \tau$. Let $x$ be a point satisfying $u_{\tau,m}(y)-u_{\tau,m}(x)=A^m_{\tau}(x, y)-\bar{A}^m_{\tau}$. Then $|y-x| \leqslant \tau D$, $|z-x| \leqslant \tau(D+1)$,
	\begin{align*}
		 u_{\tau,m}(z)-u_{\tau,m}(x) & \leqslant A^m_{\tau}(x, z)-\bar{A}^m_{\tau}, \\
		u_{\tau,m}(z)-u_{\tau,m}(y) & \leqslant A^m_{\tau}(x, z)-A^m_{\tau}(x, y) \leqslant C|z-y|.
	\end{align*}
	By changing the roles of  $z$ and $y$, we just have proved that $\operatorname{Lip}\left(u_{\tau,m}\right) \leqslant C$.
		
\end{proof}

\begin{prop}\label{pr5}
	For each constant $\kappa>0$, there exist constants $D_{\kappa}, C_{\kappa}>0$, such that if $\varphi$ is any $\mathbb{Z}^d$-periodic Lipschitz function satisfying $\operatorname{Lip}(\varphi) \leqslant \kappa$, $\tau \in(0,1]$, and $m\in\p$, then
	 \begin{itemize}
	 	\item [(i)] $\forall y \in \mathbb{R}^{d}, x \in \arg \min _{x \in \mathbb{R}^{d}}\left\{\varphi(x)+A^m_{\tau}(x, y)\right\} \Rightarrow|y-x| \leqslant \tau D_{\kappa}$
	 	\item [(ii)] $\left\|\inf_{x\in\R^d}\big(\varphi(x)+A^m_{\s}(x,\cdot)\big)-u(\cdot)\right\|_{\infty} \leqslant \tau C_{\kappa}$.
	 \end{itemize}
\end{prop}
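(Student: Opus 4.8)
The plan is to treat (i) as the essential estimate and to obtain (ii) from it together with the uniform bounds of Proposition \ref{pro}. The argument for (i) is the analogue, for an arbitrary Lipschitz $\varphi$ in place of a solution $u_{\s,m}$ of \eqref{de4}, of the reasoning in Proposition \ref{pr4}(ii): a point realizing the infimum cannot be too far from $y$, because the action grows superlinearly in the displacement while $\varphi$ grows only linearly with slope $\kappa$. Throughout I abbreviate $(T^m_\s\varphi)(y):=\inf_{x\in\R^d}\big(\varphi(x)+A^m_\s(x,y)\big)$, and I read the $u$ appearing in (ii) as $\varphi$, so that (ii) asserts that one step of the discrete Lax--Oleinik operator moves $\varphi$ by at most $\s C_\kappa$ in the sup norm.

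First I fix, once and for all, two $m$- and $\s$-uniform constants furnished by Proposition \ref{pro}(i): a constant $C'>0$ with $A^m_\s(y,y)\leqslant \s C'$ for all $y$ (take $x=y$, which is allowed since $|y-y|=0\leqslant \s D$), and a constant $C''\in\R$ with $A^m_\s(x,y)\geqslant \s C''$ for all $x,y$. To prove (i), let $y\in\R^d$ and let $x$ realize the infimum defining $(T^m_\s\varphi)(y)$. Comparing with the competitor $z=y$ and using $\mathrm{Lip}(\varphi)\leqslant\kappa$ gives
\[
A^m_\s(x,y)\leqslant \varphi(y)-\varphi(x)+\s C'\leqslant \kappa|y-x|+\s C'.
\]
I will choose $D_\kappa>1$; then if $|y-x|>\s D_\kappa$ one has $\s<|y-x|$, so $\s C'\leqslant C'|y-x|$ and hence $A^m_\s(x,y)/|y-x|\leqslant \kappa+C'$. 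On the other hand, Proposition \ref{pro}(ii) gives, uniformly in $m$ and $\s$, that $A^m_\s(x,y)/|y-x|\to+\infty$ as the threshold $D$ grows, so I fix $D_\kappa>1$ large enough that $A^m_\s(x,y)/|y-x|>\kappa+C'$ whenever $|y-x|\geqslant \s D_\kappa$. These two inequalities are incompatible, forcing $|y-x|\leqslant \s D_\kappa$, which is (i).

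For (ii), the upper bound is immediate from the competitor $z=y$: $(T^m_\s\varphi)(y)-\varphi(y)\leqslant A^m_\s(y,y)\leqslant \s C'$. For the lower bound, let $x$ realize the infimum; by (i) one has $|y-x|\leqslant \s D_\kappa$, so
\[
(T^m_\s\varphi)(y)-\varphi(y)=\varphi(x)-\varphi(y)+A^m_\s(x,y)\geqslant -\kappa|y-x|+\s C''\geqslant -\s(\kappa D_\kappa-C'').
\]
Taking $C_\kappa:=\max\{C',\,\kappa D_\kappa-C''\}$ yields $\|T^m_\s\varphi-\varphi\|_\infty\leqslant \s C_\kappa$, with $D_\kappa,C_\kappa$ depending only on $\kappa$. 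The only genuinely delicate point is uniformity: one must be certain that the superlinear growth in Proposition \ref{pro}(ii) and the constants extracted from (i) are uniform in both $m\in\p$ and $\s\in(0,1]$, which is precisely why those statements carry suprema and infima over $m$ and $\s$. Everything else reduces to a one-line comparison with the diagonal competitor.
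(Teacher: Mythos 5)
Your proof is correct. For (i) it is essentially the paper's argument: both compare the minimizing $x$ against the diagonal competitor $z=y$, use $\operatorname{Lip}(\varphi)\leqslant\kappa$, and invoke the uniform superlinearity of Proposition \ref{pro}(ii) to get a contradiction; the only cosmetic difference is that the paper absorbs your constant $C'$ into the definition of $D_\kappa$ by working with the ratio $\frac{A^m_\tau(x,y)-A^m_\tau(y,y)}{|y-x|}$ rather than bounding $A^m_\tau(y,y)\leqslant\tau C'$ separately. For (ii) your route is genuinely leaner: the paper proves the estimate twice, once for $A^m_\s=\mathcal{L}_{\s,m}$ (bounding the action by $\s\big(\max_{|v|\leqslant D_\kappa}|L(x,v)|+F_\infty\big)$) and once for $A^m_\s=h^m_\s$ (where it needs the velocity bound $C(D_\kappa)$ on minimizing curves from Lemma \ref{es}), in each case deriving an $O(\s)$ bound on the action at the localized minimizer from its explicit form; you never open up the action at all, getting the upper bound from the diagonal value $A^m_\s(y,y)\leqslant\s C'$ and the lower bound from the global bound $A^m_\s\geqslant\s C''$, both already recorded in Proposition \ref{pro}(i). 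This removes the case distinction and the appeal to Lemma \ref{es}, and makes clear that the statement holds for any action satisfying the axioms of Proposition \ref{pro}; what the paper's version buys is concrete constants expressed through $L$ and $F_\infty$. Two tacit points you handled as the paper does: reading the undefined $u(\cdot)$ in (ii) as $\varphi(\cdot)$ is the intended meaning (this is how the paper's proof, and its later use in Proposition \ref{hj}, read it), and the assumption that the infimum is attained is harmless by superlinearity and continuity.
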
	

\begin{proof}
 (i)  Let $\kappa>0$. Define
	$$
	D_{\kappa}:=\inf \left\{D^{\prime}>1: \inf _{\tau \in(0,1],|y-x|>\tau D^{\prime}, m\in\p} \frac{A^m_{\tau}(x, y)-A^m_{\tau}(y, y)}{|y-x|}>\kappa\right\}.
	$$
	Let $\varphi$ be a periodic function satisfying $\operatorname{Lip}(\varphi) \leqslant \kappa$ and $y$ be any point in $\mathbb{R}^{d}$. Let $x$ be a point realizing the minimum of $\min_{x}\left\{\varphi(x)+A^m_{\tau}(x, y)\right\}$. Assume by contradiction that $|y-x|>\tau D_{\kappa}$, then
	$$
	A^m_{\tau}(x, y)-A^m_{\tau}(y, y)>\kappa|y-x|.
	$$
	On the other hand, we have  $\varphi(x)+A^m_{\tau}(x, y) \leqslant \varphi(y)+A^m_{\tau}(y, y)$ and
	$$
	\kappa|y-x| \geqslant \varphi(y)-\varphi(x) \geqslant A^m_{\tau}(x, y)-A^m_{\tau}(y, y),
	$$
	a contradiction.

	(ii) Consider the case $A^m_\s(x,y)=\mathcal{L}_{\s,m}(x,y)$ first.
	For any given $y\in\R^d$, let $x_0\in\R^d$ be a point satisfying $x_0\in \arg \min _{x \in \mathbb{R}^{d}}\left\{\varphi(x)+\mathcal{L}_{\tau,m}(x, y)\right\}$. Then by (i) we get that $|y-x_0|\leqslant \s D_{\kappa}$. Hence,
	\begin{align*}
		\Big|\min _{x \in \mathbb{R}^{d}}\{\varphi(x)+\mathcal{L}_{\tau,m}(x, y)\}-\varphi(y)\Big|&=\Big|\varphi(x_0)-\varphi(y)+\s L(x_0, \frac{y-x_0}{\s})+\s F(x_0,m)\Big|\\
		& \leqslant \kappa|x_0-y|+\s \max_{|v|\leqslant D_\kappa}|L(x,v)|+\s F_\infty\\
		& \leqslant \s (\kappa D_\kappa+ \max_{|v|\leqslant D_\kappa}|L(x,v)|+ F_\infty)\\
		&=:\s C_\kappa.
\end{align*}
Next,  consider the case $A^m_\s(x,y)=h^m_{\s}(x,y)$.
	For any given $y\in\R^d$, let $x_0\in\R^d$ be a point satisfying $x_0\in \arg \min _{x \in \mathbb{R}^{d}}\left\{\varphi(x)+h^m_{\s}(x, y)\right\}$. Then by (i) we get that $|y-x_0|\leqslant \s D_{\kappa}$.
	Let $\g^m_{x_0,y}$ be a minimizing curve of $L_m$ with the action $h^m_{\s}(x_0,y)$.
	Then,
	\begin{align*}
		\Big|\min _{x \in \mathbb{R}^{d}}\{\varphi(x)+h^m_{\s}(x_0, y)\}-\varphi(y)\Big|&=\Big|\varphi(x_0)-\varphi(y)+\int_0^\s L(\g^m_{x_0,y}(s),\dot{\g}^m_{x_0,y}(s))+F(\g^m_{x_0,y}(s),m)ds\Big|\\
		& \leqslant \kappa|x_0-y|+\s \max_{|v|\leqslant C(D_\kappa)}|L(x,v)|+\s F_\infty\\
		& \leqslant \s (\kappa D_\kappa+ \max_{|v|\leqslant C(D_\kappa)}|L(x,v)|+ F_\infty)\\
		&=:\s C_\kappa.
	\end{align*}
	
\end{proof}

\section{Minimizing holonomic measures for mean field games}

\begin{de} For each $\s>0$, each $m\in\p$, the set
	\[
	\mathcal{M}_{\tau}(L_m)=\operatorname{cl}\Big( \bigcup\big\{\supp(\mu): \mu\ \text{is a minimizing}\  \s\text{-holonomic measure for}\ L_{m}\big\} \Big),
	\]
	is called $\s$-Mather set for $L_m$.
\end{de}

A function $\varphi: \mathbb{R}^{d} \rightarrow \mathbb{R}$ is called a $\tau$-sub-action with respect to $L_m$ if $\varphi(x)$ is $\mathbb{Z}^{d}$-periodic, continuous and satisfies
$$
\tau \bar{L}(\tau,m) \leqslant \tau L_m(x, v)+\varphi(x)-\varphi(x+\tau v), \quad \forall(x, v) \in \mathbb{T}^{d} \times \mathbb{R}^{d}.
$$
It is straightforward to check that any solution $u_{\s,m}$ of \eqref{de} is a $\tau$-sub-action with respect to $L_m$.

Define the sets
$$
\mathcal{N}_{\tau}(L_m, u_{\s,m}):=\left\{(x, v) \in \mathbb{T}^{d} \times \mathbb{R}^{d}: \tau L_m(x, v)=u_{\s,m}(x+\tau v)-u_{\s,m}(x)+\tau \bar{L}(\tau,m)\right\}.
$$
By \cite[Proposition 6.3]{GT} we have $\mathcal{M}_{\tau}(L_m) \subset \mathcal{N}_{\tau}(L_m, u_{\s,m})$.

\begin{prop}\label{cmp} There are a compact subset $\mathcal{K}\subset\T^d\times\R^d$ and a constant $\s_0>0$, such that
	$\mathcal{M}_{\tau}(L_m)\subset \mathcal{K}$ for all $0<\s<\s_0$ and all $m\in\p$.
\end{prop}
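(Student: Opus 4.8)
The plan is to bound the velocity component uniformly on the larger set $\mathcal{N}_{\tau}(L_m, u_{\s,m})$, which contains $\mathcal{M}_{\tau}(L_m)$ by the inclusion $\mathcal{M}_{\tau}(L_m) \subset \mathcal{N}_{\tau}(L_m, u_{\s,m})$ recorded just above the statement. Since the base-point component already lives in the compact torus $\T^d$, a bound $|v|\leqslant R$ that is independent of $\s\in(0,1]$ and of $m\in\p$ immediately yields the desired compact set $\mathcal{K}:=\{(x,v)\in\T^d\times\R^d:|v|\leqslant R\}$, and lets us take $\s_0=1$.

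First I would exploit the defining relation of $\mathcal{N}_{\tau}$: for any $(x,v)\in\mathcal{N}_{\tau}(L_m,u_{\s,m})$ one has
\[
\tau L_m(x, v)=u_{\s,m}(x+\tau v)-u_{\s,m}(x)+\tau \bar{L}(\tau,m).
\]
By Proposition \ref{pr4}(i) the solutions $u_{\s,m}$ are uniformly Lipschitz with a single constant $C$ (independent of $\s$ and $m$), so the right-hand side is at most $C\tau|v|+\tau\bar{L}(\tau,m)$. Dividing by $\tau$ and using $L_m=L+F(\cdot,m)$ together with (F1) gives
\[
L(x,v)\leqslant C|v|+\bar{L}(\tau,m)+F_\infty.
\]

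Next I would establish a uniform upper bound on $\bar{L}(\tau,m)$. Recalling $\bar{L}(\tau,m)=\bar{A}^m_\s/\s$ in the case $A^m_\s=\mathcal{L}_{\s,m}$, I evaluate the representation formula \eqref{for} on the constant sequence $z_0=\cdots=z_k=z$, which yields $\bar{A}^m_\s\leqslant\mathcal{L}_{\s,m}(z,z)=\s\big(L(z,0)+F(z,m)\big)$, hence $\bar{L}(\tau,m)\leqslant\max_{z\in\T^d}L(z,0)+F_\infty=:M$, uniformly in $\s\in(0,1]$ and $m\in\p$ (alternatively one may invoke the uniform upper bound in Proposition \ref{pro}(i)). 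Combining with the previous display gives $L(x,v)\leqslant C|v|+M+F_\infty$. Finally, applying superlinearity (L2) with $K=C+1$ produces $L(x,v)\geqslant (C+1)|v|+C(C+1)$, and comparing the two estimates forces $|v|\leqslant M+F_\infty-C(C+1)=:R$. Thus every $(x,v)\in\mathcal{N}_{\tau}(L_m,u_{\s,m})$ satisfies $|v|\leqslant R$, and the proposition follows with $\mathcal{K}=\{(x,v):|v|\leqslant R\}$ and $\s_0=1$.

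The main point to be careful about is the genuine uniformity in both $\s\in(0,1]$ and $m\in\p$ of all constants involved — the Lipschitz constant $C$ from Proposition \ref{pr4} and the bound $M$ on $\bar{L}(\tau,m)$. Once this uniformity is secured the argument closes cleanly, because the superlinear growth of $L$ in $v$ strictly dominates the linear-in-$|v|$ term coming from the Lipschitz estimate, leaving no room for $|v|$ to escape to infinity.
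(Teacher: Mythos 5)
Your proof is correct, and it follows the same skeleton as the paper's: both arguments pass to the larger set $\mathcal{N}_{\tau}(L_m,u_{\s,m})\supset\mathcal{M}_{\tau}(L_m)$, use the defining relation of $\mathcal{N}_{\tau}$ together with the uniform (in $\s$ and $m$) Lipschitz bound of Proposition \ref{pr4}(i) to obtain $L(x,v)+F(x,m)\leqslant C|v|+\bar{L}(\s,m)$, and then close with superlinearity (L2). The one step where you genuinely diverge is the uniform bound on $\bar{L}(\s,m)$. The paper estimates $\bar{L}(\s,m)\leqslant\bar{L}(\s)+F_\infty$ via \eqref{5-1} and then invokes the asymptotics $\bar{L}(\s)\to-c(H)$ as $\s\to0$ (from \cite{ST}), which is precisely why its conclusion holds only for $\s$ below some threshold $\s_0$. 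You instead test the representation formula \eqref{for} on constant sequences $z_0=\dots=z_k=z$ (equivalently, one could test \eqref{5-1} on the $\s$-holonomic Dirac measures $\delta_{(z,0)}$), getting $\bar{L}(\s,m)\leqslant\max_{z}L(z,0)+F_\infty$ for \emph{every} $\s\in(0,1]$ and every $m\in\p$. This is a small but real improvement: it is more elementary, avoids any appeal to the convergence theorem for $\bar{L}(\s)$, and delivers the proposition with $\s_0=1$, whereas the paper's $\s_0$ is in principle smaller. The uniformity you flag as the main concern is indeed secured, since Proposition \ref{pr4} provides a single Lipschitz constant for all $\s\in(0,1]$ and $m\in\p$.
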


\begin{proof}
	We show that $\mathcal{N}_{\tau}(L_m, u_{\s,m})$ is a bounded subset of $\T^d\times\R^d$. Note that for any $(x,v)\in\mathcal{N}_{\tau}(L_m, u_{\s,m})$, we have
\begin{align*}
	\tau L_m(x, v) =u_{\s,m}(x+\tau v)-u_{\s,m}(x)+\tau \bar{L}(\tau,m)\leqslant C\s |v|+\tau \bar{L}(\tau,m),
\end{align*}
which implies that
\begin{align}\label{2-1}
	\begin{split}
	L(x,v)+F(x,m) & \leqslant C|v|+\bar{L}(\tau,m)\\
	& \leqslant C|v|+\min _{\mu} \int_{\mathbb{T}^{d} \times \mathbb{R}^{d}} L(x,v) d\mu+F_\infty\\
	& \leqslant C|v|+\bar{L}(\s)+F_\infty,
	\end{split}
\end{align}
where $C$ independent of $\s$ and $m$ is the common Lipschitz constant of $u_{\s,m}$, and the minimum is taken over $\mathcal{P}_{\tau}\left(\mathbb{T}^{d} \times \mathbb{R}^{d}\right)$. Since $\bar{L}(\s)\to -c(H)$ as $\s\to 0$, then there is a constant $R_1>0$ and $\s_0>0$, such that $|\bar{L}(\s)|\leqslant R_1$ for all $\s\in(0,\s_0)$. Recall that $L$ is superlinear in $v$. Then by \eqref{2-1}, there is a constant $R_2>0$ such that
\[
|v|\leqslant R_2.
\]
Hence, we have proved that $\mathcal{N}_{\tau}(L_m, u_{\s,m})\subset \T^d\times B_{R_2}$ for all $\s<\s_0$ and all $m\in\p$.
\end{proof}

\begin{prop}\label{pr7}
	For each $0<\s<\s_0$, there is $m\in\p$ such that there exists a minimizing $\s$-holonomic measure $\mu_{\s,m}$ for the Lagrangian $L_m$ with
	\[
	m=\pi\sharp \mu_{\s,m}.
	\]
	We call such a measure m minimizing $\s$-holonomic  measure for mean field games \eqref{lab1} and denote it by $m_\s$ (maybe not unique).
\end{prop}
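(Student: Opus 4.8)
The plan is to realize $m_\s$ as a fixed point of a set-valued map via the Kakutani–Glicksberg–Fan theorem. First I would define, for each $m\in\p$, the set
\[
\Lambda(m):=\Big\{\pi\sharp\mu : \mu\ \text{is a minimizing}\ \s\text{-holonomic measure for}\ L_m\Big\}\subset\p.
\]
The proposition asserts the existence of $m$ with $m\in\Lambda(m)$, so it suffices to check that $\Lambda$ satisfies the hypotheses of Kakutani's fixed point theorem on a suitable convex compact subset of $\p$. Since $\p$ (with the weak-$*$ topology, which here coincides with the $d_1$-topology as $\T^d$ is compact) is a convex compact subset of the locally convex space of signed measures, the domain is in good shape.

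The key steps, in order, are the following. \textbf{(1) Nonemptiness and well-definedness.} For each fixed $m$, the functional $\mu\mapsto\int L_m\,d\mu$ attains its minimum $\bar L(\s,m)$ over $\mathcal{P}_\tau(\T^d\times\R^d)$ by \eqref{5-1}; the minimizers are nonempty, and by Proposition \ref{cmp} every minimizing $\s$-holonomic measure is supported in the fixed compact set $\mathcal K$ (since $\supp(\mu)\subset\mathcal{M}_\s(L_m)\subset\mathcal K$ for all $\s<\s_0$, $m\in\p$), so $\Lambda(m)\neq\emptyset$. \textbf{(2) Convexity of $\Lambda(m)$.} The set of minimizing $\s$-holonomic measures is convex: $\mathcal{P}_\tau$ is convex, $\mu\mapsto\int L_m\,d\mu$ is affine, so the minimizer set is a face, hence convex; pushing forward by the linear map $\pi\sharp$ preserves convexity, so $\Lambda(m)$ is convex. \textbf{(3) Compactness of the graph / upper hemicontinuity.} I would show that $\mathrm{Graph}(\Lambda)=\{(m,m')\in\p\times\p : m'\in\Lambda(m)\}$ is closed. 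Take $(m_n,m_n')\to(m,m')$ with $m_n'=\pi\sharp\mu_n$ for minimizing $\s$-holonomic $\mu_n$ for $L_{m_n}$. Since all $\mu_n$ live on the common compact $\mathcal K$, by Prokhorov a subsequence converges weakly to some $\mu$; the holonomy constraint $\int\varphi(x+\s v)\,d\mu=\int\varphi(x)\,d\mu$ passes to the limit because the integrands are continuous and bounded on $\mathcal K$, so $\mu\in\mathcal{P}_\tau$. Then $\pi\sharp\mu=m'$ by continuity of $\pi\sharp$, and I must verify $\mu$ is minimizing for $L_m$.

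The main obstacle will be step (3), specifically the joint continuity needed to pass the \emph{minimality} to the limit when the Lagrangian $L_{m_n}=L+F(\cdot,m_n)$ itself varies with $n$. The idea is: by (F3) (or (F2)) we have $L_{m_n}\to L_m$ uniformly on $\mathcal K$, since $\|F(\cdot,m_n)-F(\cdot,m)\|_\infty\leqslant\mathrm{Lip}(F)\,d_1(m_n,m)\to0$. Hence $\int L_{m_n}\,d\mu_n\to\int L_m\,d\mu$ (uniform convergence of integrands plus weak convergence of measures on the compact $\mathcal K$), giving $\int L_m\,d\mu=\lim\bar L(\s,m_n)$. It then remains to show $m\mapsto\bar L(\s,m)$ is continuous, so that $\lim\bar L(\s,m_n)=\bar L(\s,m)=\min_{\nu\in\mathcal{P}_\tau}\int L_m\,d\nu$, forcing $\mu$ to be a minimizer for $L_m$; this continuity follows from $|\bar L(\s,m_1)-\bar L(\s,m_2)|\leqslant\s\,\mathrm{Lip}(F)\,d_1(m_1,m_2)$, obtained by testing the minimization defining $\bar L(\s,m_1)$ with a minimizer for $m_2$ and using $|\int(L_{m_1}-L_{m_2})\,d\nu|\leqslant\s\,\|F(\cdot,m_1)-F(\cdot,m_2)\|_\infty$. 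With nonemptiness, convexity, and closed graph (equivalently, upper hemicontinuity with compact values) established, Kakutani's theorem yields a fixed point $m_\s\in\Lambda(m_\s)$, which is exactly the asserted minimizing $\s$-holonomic measure for mean field games.
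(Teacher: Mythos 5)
Your proposal is correct and follows essentially the same route as the paper: the paper defines the same set-valued map (there called $\Psi$), checks exactly the points you list — nonemptiness via \eqref{5-1}, convexity and closedness of the values, and upper semicontinuity by combining the common compact support from Proposition \ref{cmp}, Prokhorov's theorem, the Lipschitz dependence of $\bar L(\s,\cdot)$ on $m$ through (F3), and passage of the holonomy constraint to the limit — and then invokes Kakutani's fixed point theorem. The only (harmless) discrepancy is the factor $\s$ in your estimate $|\bar L(\s,m_1)-\bar L(\s,m_2)|\leqslant \s\,\mathrm{Lip}(F)\,d_1(m_1,m_2)$: under the paper's normalization \eqref{5-1} the bound holds with constant $\mathrm{Lip}(F)$ alone (no factor $\s$), which is all the continuity argument requires.
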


 \begin{proof}
 	For each $\s>0$, define a set-valued map as follows:
 	\begin{align*}
 		 \Psi: \ \p&\to\p\\
 		\ m&\mapsto\Psi(m):=\{\pi\sharp\mu: \mu\ \text{is a minimizing}\ \s\text{-holonomic measure for}\ L_m\}.
 	\end{align*}
We will use Kakutani fixed point theorem to get a fixed point of the map $\Psi$. So, we only need to check: (i) $\p$ is convex and compact; (ii) $\Psi$ is upper semicontinuous with nonempty closed convex values.

It is clear that the metric space $(\p,d_1)$ is convex and compact due to Prokhorov theorem. By \cite[Proposition 3.7]{GT}, for each $0<\s<\s_0$ and each $m\in\p$, there exists a minimizing $\s$-holonomic measure for $L_m$ and thus $\Psi(m)$ is nonempty. In view of Proposition \ref{cmp}, it is direct to check that $\Psi(m)$ is closed. The convexity of $\Psi(m)$ follows from the definition of $\Psi$.

Next, we show: if $m_i\stackrel{w^*}{\longrightarrow} m_0$, $\eta_i\in\Psi(m_i)$ and $\eta_i\stackrel{w^*}{\longrightarrow} \eta_0$, then $\eta_0\in\Psi(m_0)$. By definition, there is a sequence of minimizing $\s$-holonomic measures $\{\mu_{m_i}\}$ for $L_{m_i}$ such that
\begin{align}\label{2-6}
\eta_i=\pi\sharp\mu_{m_i}.
\end{align}
From Proposition \ref{cmp}, if necessary passing to a subsequence, we have
\begin{align}\label{2-7}
\mu_{m_i}\stackrel{w^*}{\longrightarrow}\mu_0.
\end{align}
In view of \eqref{2-6}, \eqref{2-7} and $\eta_i\stackrel{w^*}{\longrightarrow} \eta_0$, one can get that
\[
\eta_0=\pi\sharp\mu_0.
\]
So, we only need to show that $\mu_0$ is a minimizing $\s$-holonomic measure for $L_{m_0}$.  By \eqref{5-1},
\[
\bar{L}(\tau,m_i)=\min _{\mu} \int_{\mathbb{T}^{d} \times \mathbb{R}^{d}} L(x,v)+F(x,m_i) d\mu,
\]
and
\[
\bar{L}(\tau,m_0)=\min _{\mu} \int_{\mathbb{T}^{d} \times \mathbb{R}^{d}} L(x,v)+F(x,m_0) d\mu,
\]
where the minimum is taken over $\mathcal{P}_{\tau}\left(\mathbb{T}^{d} \times \mathbb{R}^{d}\right)$. Thus, we get that
\[
|\bar{L}(\tau,m_i)-\bar{L}(\tau,m_0)|\leqslant \mathrm{Lip}(F)d_1(m_i,m_0)\to 0, \quad i\to+\infty.
\]
Since
\[
\bar{L}(\tau,m_i)=\int_{\mathbb{T}^{d} \times \mathbb{R}^{d}} L(x,v)+F(x,m_i) d\mu_{m_i},
\]
and
\begin{align*}
& \Big|\int_{\mathbb{T}^{d} \times \mathbb{R}^{d}} L(x,v)+F(x,m_i) d\mu_{m_i}-\int_{\mathbb{T}^{d} \times \mathbb{R}^{d}} L(x,v)+F(x,m_0) d\mu_{m_0}\Big|\\
 \leqslant &\Big|\int_{\mathbb{T}^{d} \times \mathbb{R}^{d}} L(x,v)+F(x,m_i) d\mu_{m_i}-\int_{\mathbb{T}^{d} \times \mathbb{R}^{d}} L(x,v)+F(x,m_0) d\mu_{m_i}\Big|\\
& +   \Big|\int_{\mathbb{T}^{d} \times \mathbb{R}^{d}} L(x,v)+F(x,m_0) d\mu_{m_i}-\int_{\mathbb{T}^{d} \times \mathbb{R}^{d}} L(x,v)+F(x,m_0) d\mu_{m_0}\Big|\\
\leqslant & \mathrm{Lip}(F)d_1(m_i,m_0)+\Big|\int_{\mathbb{T}^{d} \times \mathbb{R}^{d}} L(x,v)+F(x,m_0) d(\mu_{m_i}-\mu_{m_0})\Big|\to0,
\end{align*}
then we obtain
\[
\bar{L}(\tau,m_0)=\int_{\mathbb{T}^{d} \times \mathbb{R}^{d}} L(x,v)+F(x,m_0) d\mu_{m_0}.
\]
Since $\mu_{m_i}$ are minimizing $\s$-holonomic measures, by  \eqref{2-7} and the compactness of the supports of the measures we deduce that $\mu_0$ is also $\s$-holonomic, and the proof is complete.

 \end{proof}

\section{Convergence to Hamilton-Jacobi equations}

For each $\s>0$, consider solutions of the discrete Lax-Oleinik equation
\begin{align}\label{de1}
	u_{\s,m_\s}(y)+\s\bar{L}(\s,m_\s)=\inf_{x\in\R^d}\big(u_{\s,m_\s}(x)+\mathcal{L}_{\s,m_\s}(x,y)\big), \quad  \forall y\in\R^d.
\end{align}

\begin{prop}\label{hj}
	There is a subsequence $\s_i\to0$, a subsequence $m_{\s_i}\stackrel{w^*}{\longrightarrow} m_0$, and a subsequence $u_{\s_i,m_{\s_i}}$ solutions of \eqref{de1}  such that $u_{\s_i,m_{\s_i}}$ converges to $u_0$ uniformly on $\T^d$. Moreover,  $u_0$ is a viscosity solution of
	\[
	H(x,Du)=F(x,m_0)+c(m_0).
	\]
\end{prop}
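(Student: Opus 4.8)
The plan is to obtain the subsequences by soft compactness, identify the limit of the constants $\s\bar L(\s,m_\s)$, and then pass to the limit in the one-step equation \eqref{de1} to check the viscosity inequalities directly. After normalizing each chosen solution by $\min_{\T^d}u_{\s,m_\s}=0$ (permissible because \eqref{de1} fixes $u_{\s,m_\s}$ only up to an additive constant), the common Lipschitz bound $\mathrm{Lip}(u_{\s,m_\s})\le C$ from Proposition \ref{pr4}(i) makes $\{u_{\s,m_\s}\}$ equibounded and equicontinuous on the compact torus, so Arzel\`a-Ascoli yields $u_{\s_i,m_{\s_i}}\to u_0$ uniformly; Prokhorov gives a further extraction with $m_{\s_i}\stackrel{w^*}{\longrightarrow}m_0$, i.e. $d_1(m_{\s_i},m_0)\to0$. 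To pin down the constant, the estimate $|\bar L(\s,m_1)-\bar L(\s,m_2)|\le\mathrm{Lip}(F)\,d_1(m_1,m_2)$ established in the proof of Proposition \ref{pr7} is uniform in $\s$; letting $\s\to0$ and using $\bar L(\s,m)\to-c(m)$ shows $c(\cdot)$ is Lipschitz, whence $|\bar L(\s_i,m_{\s_i})+c(m_0)|\le\mathrm{Lip}(F)\,d_1(m_{\s_i},m_0)+|\bar L(\s_i,m_0)+c(m_0)|\to0$.

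For the viscosity verification I work from the definition by a single-step analysis. Suppose first that $u_0-\varphi$ has a local maximum at $x_0$ with $\varphi\in C^1$ (made strict by the usual quadratic perturbation); uniform convergence produces maximizers $x_i\to x_0$ of $u_{\s_i,m_{\s_i}}-\varphi$, normalized so that $u_{\s_i,m_{\s_i}}\le\varphi$ near $x_0$ with equality at $x_i$. Evaluating \eqref{de1} at $y=x_i$ and testing the infimum against the competitor $x=x_i-\s_i v$ yields, for each fixed $v$ and all large $i$, $\varphi(x_i-\s_i v)-\varphi(x_i)\ge\s_i\bar L(\s_i,m_{\s_i})-\s_i L_{m_{\s_i}}(x_i-\s_i v,v)$; dividing by $\s_i$, expanding $\varphi$ to first order, and letting $i\to\infty$ gives $D\varphi(x_0)v-L_{m_0}(x_0,v)\le c(m_0)$, and the supremum over $v$ is exactly $H(x_0,D\varphi(x_0))\le F(x_0,m_0)+c(m_0)$. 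For the supersolution inequality at a local minimum $y_0$ of $u_0-\psi$ I use the \emph{attained} minimum of \eqref{de1} at $y=y_i$: Proposition \ref{pr4}(ii) bounds the discrete velocity $v_i^*=(y_i-x_i^*)/\s_i$ by $D$ (so $x_i^*\to y_0$), the local-minimum property turns the equality in \eqref{de1} into $\s_i L_{m_{\s_i}}(x_i^*,v_i^*)-\s_i\bar L(\s_i,m_{\s_i})\le\psi(y_i)-\psi(x_i^*)$, and after dividing by $\s_i$ and bounding $D\psi(x_i^*)v_i^*-L_{m_{\s_i}}(x_i^*,v_i^*)\le H_{m_{\s_i}}(x_i^*,D\psi(x_i^*))$ the limit gives $H_{m_0}(y_0,D\psi(y_0))\ge c(m_0)$, i.e. $H(y_0,D\psi(y_0))\ge F(y_0,m_0)+c(m_0)$.

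The main difficulty is to run three limits at once — $\s_i\to0$, $m_{\s_i}\to m_0$, and the moving base points $x_i,x_i^*\to x_0,y_0$ — while keeping the Taylor remainders genuinely $o(\s_i)$ and the Lagrangian and Hamiltonian values convergent. The velocity bound $|v_i^*|\le D$ from Proposition \ref{pr4}(ii) is essential: it confines all evaluations to a fixed compact set, on which the joint continuity of $(x,p,m)\mapsto H(x,p)-F(x,m)$ from (F2) and of $L_m$ applies; in the subsolution step one must fix $v$ \emph{before} sending $i\to\infty$, so that the competitor $x_i-\s_i v$ stays in the neighborhood where $u_{\s_i,m_{\s_i}}\le\varphi$. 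A less direct alternative would verify $u_0$ through the continuous Lax-Oleinik characterization \eqref{5-2}, iterating \eqref{de1} over $k_i$ steps with $k_i\s_i\to t$ and matching the discrete action sums to $h^{m_0}_t$ via Proposition \ref{pr1}; this works but carries the extra bookkeeping of accumulated $O(k_i\s_i^2)$ and measure-drift errors, which is why I prefer the single-step argument.
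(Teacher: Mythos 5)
Your proof is correct, but it takes a genuinely different route from the paper's. The paper never touches the test-function definition of viscosity solutions: it compares the discrete operator $\mathfrak{T}^m_\s$ with the continuous Lax--Oleinik operator $T^m_\s$ via the one-step estimate $\|T^m_\s u-\mathfrak{T}^m_\s u\|_\infty\leqslant \s^2 C$ (which rests on Proposition \ref{pr1}), iterates this $N_i\approx t/\s_i$ times while controlling the accumulated $O(N_i\s_i^2)$ error and the measure-drift term $N_i\s_i\,\mathrm{Lip}(F)d_1(m_i,m_0)$, and concludes the fixed-point identity $T^{m_0}_t u_0=u_0-t\,c(m_0)$ for all $t>0$, which by \eqref{5-2} characterizes viscosity solutions --- exactly the ``less direct alternative'' you mention at the end. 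Your argument instead verifies the sub- and supersolution inequalities directly from the single-step equation \eqref{de1}, in the spirit of consistency-plus-stability arguments for monotone schemes: the subsolution half tests the infimum against the competitor $x_i-\s_i v$ with $v$ fixed, the supersolution half exploits the attained minimizer together with the velocity bound of Proposition \ref{pr4}(ii), and both then pass to the limit using the constant identification $\bar L(\s_i,m_{\s_i})\to -c(m_0)$ (the same estimate the paper uses, via \eqref{5-1} and (F3)) and the joint continuity of $L$, $H$ and $F$ from (F2)--(F3). What the paper's route buys is the weak-KAM fixed-point identity itself, which is stronger than the viscosity property and fits the toolkit ($h^m_\s$ versus $\mathcal{L}_{\s,m}$, non-expansiveness, the quantitative rate from [ST]) already assembled in Section 2; what your route buys is economy --- you bypass Proposition \ref{pr1} and the semigroup iteration entirely, localize all error analysis to one time step, and need only Proposition \ref{pr4}, the Lipschitz dependence of $\bar L(\s,\cdot)$ on $m$, and the convergence $\bar L(\s,m_0)\to-c(m_0)$ for the fixed limit measure. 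Both proofs share the same compactness setup (normalization, Arzel\`a--Ascoli, Prokhorov). Two cosmetic remarks: the Lipschitz continuity of $c(\cdot)$ that you derive is never actually needed, since your displayed triangle inequality already suffices; and in the subsolution step it would be worth stating explicitly that the maximizers $x_i$ are taken over a fixed compact neighborhood of $x_0$ on which the perturbed maximum is strict, which is what guarantees $x_i\to x_0$.
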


\begin{proof}
For each $m\in\p$, define two kinds of one-parameter operators $\mathfrak{T}_{\tau}^m$ and $T^m_{\tau}$ as follows:
\[
\mathfrak{T}_{\tau}^m\varphi(y)=\inf_{x\in\R^d}\big(\varphi(x)+\mathcal{L}_{\s,m}(x,y)\big),
\]
and
\[
T_{\tau}^m\varphi(y)=\inf_{x\in\R^d}\big(\varphi(x)+h^m_{\s}(x,y)\big).
\]
We claim there exists a constant $C>0$ such that for each small $\tau>0$, each $m\in\p$, and  each solution $u$ of equation \eqref{de} with $u(0)=0$,
	$$
	\left\|T^m_{\tau}u-\mathfrak{T}_{\tau}^mu\right\|_{\infty} \leqslant \tau^{2} C.
	$$
	In fact, the above estimate is a consequence of Propositions \ref{pr1}, \ref{pr4} and \ref{pr5}. More precisely, from Propositions \ref{pr1}, \ref{pr4} and \ref{pr5}, there exist positive constants $D$ and $C$ such that for each $\tau \in(0,1]$, each $m\in\p$ and  each solution $u$ of equation \eqref{de}, we have that
	\begin{itemize}
		\item $\mathrm{Lip}(u) \leqslant C,\|u\|_{\infty} \leqslant C$;
		\item $\forall y \in \mathbb{R}^{d}, \quad x \in \arg \min _{x \in \mathbb{R}^{d}}\left\{u(x)+\mathcal{L}_{\tau,m}(x, y)\right\} \Rightarrow|y-x| \leqslant \tau D$;
		\item $\forall y \in \mathbb{R}^{d}, \quad x \in \arg \min _{x \in \mathbb{R}^{d}}\left\{u(x)+h^m_{\tau}(x, y)\right\} \Rightarrow|y-x| \leqslant \tau D$;
		\item $\left\|T^m_{\tau}u-u\right\|_{\infty} \leqslant \tau C$;
		\item for each $x, y\in\R^d$, $|y-x| \leqslant \tau D \Rightarrow | h^m_{\tau}(x, y)-\mathcal{L}_{\tau,m}(x, y) \mid \leqslant \tau^{2} C$.
	\end{itemize}
	
For each $y$ and $x \in \arg \min _{x \in \mathbb{R}^{d}}\left\{u(x)+\mathcal{L}_{\tau,m}(x, y)\right\}$,
	we have $$
	\begin{gathered}
		T_{\tau}^mu(y) \leqslant u(x)+h^m_{\tau}(x, y) \leqslant u(x)+\mathcal{L}_{\tau,m}(x, y)+\tau^{2} C \leqslant \mathfrak{T}^m_{\tau}u(y)+\tau^{2} C.
	\end{gathered}
	$$
On the other hand, if $x \in \arg \min _{x \in \mathbb{R}^{d}}\left\{u(x)+h^m_{\tau}(x, y)\right\}$,
$$
\begin{gathered}
	T^m_{\tau}u(y)=u(x)+h^m_{\tau}(x, y) \geqslant u(x)+\mathcal{L}_{\tau,m}(x, y)-\tau^{2} C\geqslant \mathfrak{T}^m_{\tau}u(y)-\tau^{2} C.
\end{gathered}
$$	
Therefore, the above claim is true.

By the Lipschitz estimate, for each $\tau \in(0,1]$ and each $m\in\p$, one can choose solutions $u_{\tau,m}$ of \eqref{de} such that $u_{\tau,m}(0)=0$ and thus $u_{\tau,m}$ is uniformly bounded in $\tau \in(0,1]$ and $m\in\p$. Thus,
by Ascoli-Arzela theorem and Prokhorov theorem, we can choose a subsequence $m_{\s_i} \stackrel{w^*}{\longrightarrow} m_0\in\p$ and a subsequence $u_{\s_i,m_{\s_i}}\to u_0$ uniformly on $\T^d$.  For brevity, we use $m_i$, $u_i$ to denote $m_{\s_i}$ and $u_{\s_i,m_{\s_i}}$ respectively in the following.

Let $t>0$ be fixed, and $N_{i}$ be integers such that $N_{i} \tau_{i} \leqslant t<\left(N_{i}+1\right) \tau_i$. The non-expansiveness property of the Lax-Oleinik operator $T^m_t$ implies
\begin{align*}
\left\|T^{m_0}_{t}u-T_{N_{i} \tau_{i}}^{m_i}u_{i}\right\|_{\infty}
&\leqslant \left\|T^{m_0}_{t}u-T_{N_{i} \tau_{i}}^{m_0}u_{i}\right\|_{\infty}+\left\|T_{N_{i} \tau_{i}}^{m_0}u_{i}-T_{N_{i} \tau_{i}}^{m_i}u_{i}\right\|_{\infty}.
\end{align*}
Note that by Proposition \ref{pr5}, we get
\[
\left\|T^{m_0}_{t}u-T_{N_{i} \tau_{i}}^{m_0}u_{i}\right\|_{\infty}\leqslant \left\|T^{m_0}_{t-N_{i} \tau_{i}}u-u\right\|_{\infty}+\left\|u-u_{i}\right\|_{\infty}\to0,\quad i\to+\infty.
\]
Notice that
\begin{align*}
\left\|T_{N_{i} \tau_{i}}^{m_0}u_{i}-T_{N_{i} \tau_{i}}^{m_i}u_{i}\right\|_{\infty}&=
\left\|\inf_{x\in\R^d}\big(u_i(x)+h^{m_0}_{N_{i} \tau_{i}}(x,\cdot)\big)-\inf_{x\in\R^d}\big(u_i(x)+h^{m_i}_{N_{i} \tau_{i}}(x,\cdot)\big)\right\|_{\infty}\\
&\leqslant N_i\s_i \mathrm{Lip}(F)d_1(m_i,m_0)\to0,\quad i\to+\infty.
\end{align*}
So, we have proved that
\[
\left\|T^{m_0}_{t}u-T_{N_{i} \tau_{i}}^{m_i}u_{i}\right\|_{\infty}\to0,\quad i\to+\infty.
\]

Consider
\[
\frac{\bar{\mathcal{L}}_{\s_i,m_i}}{\s_{i}}+c(m_0)=\frac{\bar{\mathcal{L}}_{\s_i,m_i}}{\s_{i}}-\frac{\bar{\mathcal{L}}_{\s_i,m_0}}{\s_{i}}+\frac{\bar{\mathcal{L}}_{\s_i,m_0}}{\s_{i}}+c(m_0).
\]
Note that
\begin{align*}
\Big|\frac{\bar{\mathcal{L}}_{\s_i,m_i}}{\s_{i}}-\frac{\bar{\mathcal{L}}_{\s_i,m_0}}{\s_{i}}\Big|&=|\bar{L}(\tau_i,m_i)-\bar{L}(\tau_i,m_0)|\\
& =\Big|\min _{\mu} \int_{\mathbb{T}^{d} \times \mathbb{R}^{d}} L(x,v)+F(x,m_i) d\mu-\min _{\mu} \int_{\mathbb{T}^{d} \times \mathbb{R}^{d}} L(x,v)+F(x,m_0) d\mu\Big|\\
&\leqslant \mathrm{Lip}(F)d_1(m_i,m_0)\to0,\quad i\to+\infty,
\end{align*}
and by \cite[Theorem 9]{ST},
\[
\Big|\frac{\bar{\mathcal{L}}_{\s_i,m_0}}{\s_{i}}+c(m_0)\Big|\leqslant \s_i C.
\]
Thus, we obtain that
\begin{align}\label{2-10}
\Big|\frac{\bar{\mathcal{L}}_{\s_i,m_i}}{\s_{i}}+c(m_0)\Big|\leqslant \mathrm{Lip}(F)d_1(m_i,m_0)+\s_i C.
\end{align}
The previous claim $\left\|T^{m_i}_{\tau_{i}}u_{i}-\mathfrak{T}^{m_i}_{\tau_{i}}u_{i}\right\|_{\infty} \leqslant \tau_{i}^{2} C$ and \eqref{2-10} imply that
\[
\left\|T^{m_i}_{\tau_{i}}u_{i}-u_{i}+\s_ic(m_0)\right\|_{\infty}\leqslant\left\|T^{m_i}_{\tau_{i}}u_{i}-u_i-\bar{\mathcal{L}}_{\s_i,m_i}\right\|_{\infty}+\Big|\bar{\mathcal{L}}_{\s_i,m_i}+\s_ic(m_0)\Big|\leqslant \s_i\mathrm{Lip}(F)d_1(m_i,m_0)+2\s_i^2 C.
\]
By iterating this inequality, we have
$$
\left\|T^{m_i}_{N_{i} \tau_{i}}u_{i}-u_{i}+N_{i} \tau_{i} c(m_0)\right\|_{\infty} \leqslant N_{i} (\s_i\mathrm{Lip}(F)d_1(m_i,m_0)+2\s_i^2 C) \leqslant t(\mathrm{Lip}(F)d_1(m_i,m_0)+2\s_i C).
$$
Since $u_{i}+N_{i} \tau_{i} c(m_0) \rightarrow u_0+t c(m_0)$, we get
$$
T_{t}^{m_0}u_0=u_0-tc(m_0), \quad \forall t>0.
$$

\end{proof}

\section{Convergence to continuity equations}

\begin{prop}\label{m}
	There is a measure $\mu_0\in\mathcal{P}\left(\mathbb{T}^{d} \times \mathbb{R}^{d}\right)$ such that $\mu_{\s_i,m_{\s_i}}\stackrel{w^*}{\longrightarrow}  \mu_0$ with $m_0=\pi\sharp\mu_0$, where $\s_i$, $m_{\s_i}$ and $m_0$ are as in Proposition \ref{hj}.
	\end{prop}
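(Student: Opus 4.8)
The plan is to deduce the convergence of the holonomic measures from the uniform compactness of Mather sets already established in Proposition \ref{cmp}, and then to identify the projection of the limit by continuity of the pushforward.

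First I would recall that, by its construction in Proposition \ref{pr7}, each $\mu_{\s_i,m_{\s_i}}$ is a minimizing $\s_i$-holonomic measure for $L_{m_{\s_i}}$, so its support lies in the $\s_i$-Mather set $\mathcal{M}_{\s_i}(L_{m_{\s_i}})$. By Proposition \ref{cmp} there is a single compact set $\mathcal{K}\subset\T^d\times\R^d$ and a threshold $\s_0>0$ with $\mathcal{M}_{\s}(L_m)\subset\mathcal{K}$ for all $0<\s<\s_0$ and all $m\in\p$. Hence $\supp(\mu_{\s_i,m_{\s_i}})\subset\mathcal{K}$ for every $i$ with $\s_i<\s_0$, so the family $\{\mu_{\s_i,m_{\s_i}}\}_i$ consists of probability measures supported in a fixed compact set, and is therefore tight.

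Next I would invoke Prokhorov's theorem (in the form quoted in the introduction, with weak and weak-$*$ convergence coinciding for probability measures) to extract a subsequence, not relabeled, with $\mu_{\s_i,m_{\s_i}}\stackrel{w^*}{\longrightarrow}\mu_0$ for some $\mu_0\in\mathcal{P}(\T^d\times\R^d)$. Since all the supports lie in the closed set $\mathcal{K}$, so does $\supp(\mu_0)$; in particular no mass escapes to infinity and $\mu_0$ is a genuine probability measure. Finally I would identify $\pi\sharp\mu_0$: for any $\varphi\in C(\T^d)$ the function $\varphi\circ\pi$ is bounded and continuous on $\T^d\times\R^d$, so weak-$*$ convergence yields
\[
\int_{\T^d}\varphi\,d(\pi\sharp\mu_{\s_i,m_{\s_i}})=\int_{\T^d\times\R^d}\varphi(x)\,d\mu_{\s_i,m_{\s_i}}(x,v)\longrightarrow\int_{\T^d\times\R^d}\varphi(x)\,d\mu_0(x,v)=\int_{\T^d}\varphi\,d(\pi\sharp\mu_0).
\]
Because $\pi\sharp\mu_{\s_i,m_{\s_i}}=m_{\s_i}$ and $m_{\s_i}\stackrel{w^*}{\longrightarrow}m_0$ by Proposition \ref{hj}, the left-hand side also converges to $\int_{\T^d}\varphi\,dm_0$. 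Uniqueness of weak-$*$ limits then gives $\pi\sharp\mu_0=m_0$, as required.

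Since the genuinely analytic work — the bound on the Mather sets that is uniform in both $\s$ and $m$ — is already carried out in Proposition \ref{cmp}, I do not expect a serious obstacle here. The only points demanding care are that the specific measures $\mu_{\s_i,m_{\s_i}}$ are themselves minimizing holonomic measures, so that their supports sit inside the Mather set $\mathcal{M}_{\s_i}(L_{m_{\s_i}})$ rather than in some a priori larger set, and that the pushforward commutes with the weak-$*$ limit, which is immediate because $\pi$ is continuous and everything is supported in the fixed compact set $\mathcal{K}$.
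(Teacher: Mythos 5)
Your proof is correct and takes essentially the same route as the paper: tightness of $\{\mu_{\s_i,m_{\s_i}}\}$ via the uniform compact bound of Proposition \ref{cmp}, Prokhorov's theorem to extract the weak-$*$ limit $\mu_0$, and identification $m_0=\pi\sharp\mu_0$ from $\pi\sharp\mu_{\s_i,m_{\s_i}}=m_{\s_i}\stackrel{w^*}{\longrightarrow}m_0$. You merely spell out the two steps the paper leaves implicit — that the supports of the minimizing holonomic measures sit inside the uniformly bounded Mather sets, and that the pushforward commutes with the weak-$*$ limit by testing against $\varphi\circ\pi$ — which is a faithful filling-in rather than a different argument.
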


\begin{proof}
	By Proposition \ref{cmp}, the sequence of $\mu_{\s_i,m_{\s_i}}$ is tight. In view of Prokhorov theorem, there is $\mu_0\in\mathcal{P}\left(\mathbb{T}^{d} \times \mathbb{R}^{d}\right)$ such that $\mu_{\s_i,m_{\s_i}}\stackrel{w^*}{\longrightarrow}  \mu_0$. Note that 	
	\[
	m_{\s_i}=\pi\sharp \mu_{\s_i,m_{\s_i}},\quad m_{\s_i}\stackrel{w^*}{\longrightarrow} m_0,\quad i\to+\infty.
	\]
	One can deduce that $m_0=\pi\sharp\mu_0$.
\end{proof}

\begin{prop}
	$\mu_0$ is a Mather measure for $L_{m_0}$.
\end{prop}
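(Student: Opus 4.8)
The plan is to verify the two properties that, as recalled earlier, characterize a Mather measure for $L_{m_0}$: that $\mu_0$ is a closed measure (that is, $\mu_0\in\mathcal{K}(\T^d\times\R^d)$) and that $\int_{\T^d\times\R^d} L_{m_0}\,d\mu_0=-c(m_0)$. By the fact recorded in Remark~1(ii), these two conditions together force $\mu_0$ to be a Mather measure, so no direct verification of $\Phi^{L_{m_0}}_t$-invariance is needed. Throughout I would use that, by Proposition~\ref{cmp}, every $\mu_{\s_i,m_{\s_i}}$ is supported in the fixed compact set $\mathcal{K}\subset\T^d\times\R^d$, whence so is the weak$^*$ limit $\mu_0$; in particular $\mu_0$ is a probability measure with $\int|v|\,d\mu_0<+\infty$, and the linear coordinate $v$ as well as $L_{m_0}$ are bounded on the supports, so weak$^*$ convergence may legitimately be paired with these (a priori unbounded) integrands after restriction to $\mathcal{K}$.

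For closedness, I would start from the $\s_i$-holonomic identity applied to an arbitrary $\varphi\in C^1(\T^d)$ and rewrite the difference through a first-order expansion,
\[
0=\int_{\T^d\times\R^d}\big(\varphi(x+\s_i v)-\varphi(x)\big)\,d\mu_{\s_i,m_{\s_i}}=\s_i\int_{\T^d\times\R^d}\int_0^1\langle D\varphi(x+t\s_i v),v\rangle\,dt\,d\mu_{\s_i,m_{\s_i}}.
\]
Dividing by $\s_i$, I would then compare the inner integrand with $\langle D\varphi(x),v\rangle$: since $D\varphi$ is uniformly continuous on $\T^d$ and $|v|\le R_2$ on $\mathcal{K}$, one has $|t\s_i v|\le\s_i R_2\to0$ uniformly, so $\langle D\varphi(x+t\s_i v),v\rangle\to\langle D\varphi(x),v\rangle$ uniformly on the supports. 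The residual term converges by the weak$^*$ convergence $\mu_{\s_i,m_{\s_i}}\stackrel{w^*}{\longrightarrow}\mu_0$ against the continuous bounded function $\langle D\varphi(\cdot),\cdot\rangle$ on $\mathcal{K}$, yielding $\int_{\T^d\times\R^d}\langle D\varphi(x),v\rangle\,d\mu_0=0$ for every $\varphi\in C^1(\T^d)$, i.e.\ $\mu_0\in\mathcal{K}(\T^d\times\R^d)$.

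For the action identity, I would first recall from \eqref{2-10}, combined with $\bar{\mathcal{L}}_{\s,m}/\s=\bar{L}(\s,m)$, that $\bar{L}(\s_i,m_{\s_i})\to-c(m_0)$. Since $\bar{L}(\s_i,m_{\s_i})=\int_{\T^d\times\R^d} L_{m_{\s_i}}\,d\mu_{\s_i,m_{\s_i}}$ and $L_{m_{\s_i}}-L_{m_0}=F(\cdot,m_{\s_i})-F(\cdot,m_0)$, assumption (F3) gives
\[
\Big|\int_{\T^d\times\R^d} L_{m_{\s_i}}\,d\mu_{\s_i,m_{\s_i}}-\int_{\T^d\times\R^d} L_{m_0}\,d\mu_{\s_i,m_{\s_i}}\Big|\le\mathrm{Lip}(F)\,d_1(m_{\s_i},m_0)\to0,
\]
so $\int_{\T^d\times\R^d} L_{m_0}\,d\mu_{\s_i,m_{\s_i}}\to-c(m_0)$. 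Because $L_{m_0}$ is continuous and hence bounded on the compact $\mathcal{K}$, weak$^*$ convergence gives $\int_{\T^d\times\R^d} L_{m_0}\,d\mu_{\s_i,m_{\s_i}}\to\int_{\T^d\times\R^d} L_{m_0}\,d\mu_0$, and therefore $\int_{\T^d\times\R^d} L_{m_0}\,d\mu_0=-c(m_0)$.

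The two steps combined close the argument. I expect the main obstacle to be the closedness step: one must justify both dividing the holonomic identity by $\s_i$ and interchanging the limit $\s_i\to0$ with the weak$^*$ limit of $\mu_{\s_i,m_{\s_i}}$. The essential ingredient making this legitimate is the uniform support control of Proposition~\ref{cmp}, which simultaneously bounds $|v|$ and upgrades the pointwise convergence $D\varphi(x+t\s_i v)\to D\varphi(x)$ to uniform convergence on the supports; without this uniform compactness the limiting measure could fail to be closed, or even to remain a probability measure, and the weak$^*$ pairing with the unbounded $L_{m_0}$ would not be available.
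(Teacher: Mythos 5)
Your proposal is correct and follows essentially the same route as the paper's proof: establish the action identity $\int L_{m_0}\,d\mu_0=-c(m_0)$ via the Lipschitz dependence of $\bar{L}(\s,m)$ on $m$ together with the known convergence of the discrete critical values (the paper rederives inside the proof the very estimate you cite as \eqref{2-10}), establish closedness by passing the $\s_i$-holonomy identity to the limit using the uniform compact support from Proposition~\ref{cmp}, and then invoke the characterization of Mather measures as closed measures attaining $-c(m_0)$. The only difference is a harmless technical variant in the closedness step: where the paper reduces to $C^2$ test functions by density and uses the difference quotient $\Delta\varphi_\s$, you treat $C^1$ test functions directly via the fundamental theorem of calculus and uniform continuity of $D\varphi$ on $\T^d$, which works equally well and even spares the density argument.
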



\begin{proof}
	First we prove that
	\begin{align}\label{2-3}
	\int_{\T^d\times\R^d}L_{m_0}(x,v)d\mu_0=-c(m_0).
	\end{align}
Recall that
\[
\bar{L}(\s_i,m_{\s_i})=\int_{\T^d\times\R^d}L(x,v)+F(x,m_{\s_i})d\mu_{\s_i,m_{\s_i}}.
\]	
Note that
\begin{align*}
	& \Big|\int_{\T^d\times\R^d}L(x,v)+F(x,m_{\s_i})d\mu_{\s_i,m_{\s_i}}-\int_{\T^d\times\R^d}L(x,v)+F(x,m_0)d\mu_0\Big|\\
 \leqslant 	& \Big|\int_{\T^d\times\R^d}L(x,v)d\mu_{\s_i,m_{\s_i}}-\int_{\T^d\times\R^d}L(x,v)d\mu_0\Big|+\Big|\int_{\T^d\times\R^d}F(x,m_{\s_i})d\mu_{\s_i,m_{\s_i}}-\int_{\T^d\times\R^d}F(x,m_0)d\mu_{\s_i,m_{\s_i}}\Big|\\
	& + \Big|\int_{\T^d\times\R^d}F(x,m_0)d\mu_{\s_i,m_{\s_i}}-\int_{\T^d\times\R^d}F(x,m_0)d\mu_0\Big|.
\end{align*}
Since $\mu_{\s_i,m_{\s_i}}\stackrel{w^*}{\longrightarrow}  \mu_0$ and $m_{\s_i}\stackrel{w^*}{\longrightarrow} m_0$, then the first and the third terms in the right hand side of the above inequality go to 0 as $i\to+\infty$. We take care of the second term as follows:
\[
\Big|\int_{\T^d\times\R^d}F(x,m_{\s_i})d\mu_{\s_i,m_{\s_i}}-\int_{\T^d\times\R^d}F(x,m_0)d\mu_{\s_i,m_{\s_i}}\Big|\leqslant\mathrm{Lip}(F)d_1(m_{\s_i},m_0)\to0,
\]
as $i\to+\infty$. So, we get that
\[
\bar{L}(\s_i,m_{\s_i})\to \int_{\T^d\times\R^d}L(x,v)+F(x,m_0)d\mu_0.
\]
To finish the proof of \eqref{2-3}, it suffices to show that 	
\begin{align}\label{2-2}
\bar{L}(\s_i,m_{\s_i})\to -c(m_0).
\end{align}
Note that
\begin{align*}
	&|\bar{L}(\s_i,m_{\s_i})+c(m_0)|\\
	  \leqslant& |\bar{L}(\s_i,m_{\s_i})-\bar{L}(\s_i,m_0)|+|\bar{L}(\s_i,m_0)-c(m_0)|\\
	 \leqslant& \sup_{\mu}\Big|\int_{\T^d\times\R^d}L(x,v)+F(x,m_{\s_i})d\mu-\int_{\T^d\times\R^d}L(x,v)+F(x,m_0)d\mu\Big|+ |\bar{L}(\s_i,m_0)-c(m_0)|\\
\leqslant & \sup_{\mu}\int_{\T^d\times\R^d}|F(x,m_{\s_i})-F(x,m_0)|d\mu+ |\bar{L}(\s_i,m_0)-c(m_0)|\\
\leqslant & \mathrm{Lip}(F)d_1(m_{\s_i},m_0)+ |\bar{L}(\s_i,m_0)-c(m_0)|,
\end{align*}
where the supremum is taken over $\mathcal{P}_{\tau_i}\left(\mathbb{T}^{d} \times \mathbb{R}^{d}\right)$. Letting $i\to+\infty$, we get \eqref{2-2}.

Next, we only need to show that $\mu_0$ is a closed measure, i.e.,
\begin{align}\label{2-4}
\int_{\T^d\times\R^d}vD\varphi(x)d\mu_0=0,\quad \forall \varphi\in C^1(\T^d).
\end{align}
Since $C^2(\T^d)$ is a dense subset of $C^1(\T^d)$, it suffices to show \eqref{2-4} holds for each $\varphi\in C^2(\T^d)$.

For each $\s$ and each $\varphi\in C^2(\T^d)$, define
\[
\Delta\varphi_\s(x,v):=\frac{\varphi(x+\s v)-\varphi(x)}{\s}, \quad \forall (x,v)\in\T^d\times\R^d.
\]
It is clear that
\[
\lim_{\s\to0}\Delta\varphi_\s(x,v)=vD\varphi(x),\quad \forall (x,v)\in\T^d\times\R^d.
\]
In fact, for any compact subset $\mathcal{K}'$ of $\T^d\times\R^d$, one can deduce that
\begin{align}\label{2-5}
\lim_{\s\to0}\Delta\varphi_\s(x,v)=vD\varphi(x)
\end{align}
uniformly on $\mathcal{K}'$.

Note that
\begin{align*}
& \Big|\int_{\T^d\times\R^d}\Delta\varphi_{\s_i}(x,v)d\mu_{\s_i,m_{\s_i}}-	\int_{\T^d\times\R^d}vD\varphi(x)d\mu_0\Big|\\
 \leqslant & \Big|\int_{\T^d\times\R^d}\Delta\varphi_{\s_i}(x,v)d\mu_{\s_i,m_{\s_i}}-	\int_{\T^d\times\R^d}vD\varphi(x)d\mu_{\s_i,m_{\s_i}}\Big|+\Big|\int_{\T^d\times\R^d}vD\varphi(x)d\mu_{\s_i,m_{\s_i}}-	\int_{\T^d\times\R^d}vD\varphi(x)d\mu_0\Big|.
\end{align*}
Recall that
$\mu_{\s_i,m_{\s_i}}\stackrel{w^*}{\longrightarrow}  \mu_0$ as $i\to+\infty$ and Corollary \ref{cmp} and \eqref{2-5}. We get that
\[
\lim_{i\to+\infty}\int_{\T^d\times\R^d}\Delta\varphi_{\s_i}(x,v)d\mu_{\s_i,m_{\s_i}}=	\int_{\T^d\times\R^d}vD\varphi(x)d\mu_0.
\]
Since $\mu_{\s_i,m_{\s_i}}$ is a minimizing $\s_i$-holonomic  measure, then
\[
\frac{1}{\s_i}\int_{\T^d\times\R^d}\varphi(x+\tau_i v) - \varphi(x) d\mu_{\s_i,m_{\s_i}}=0.
\]
So, we have that
\[
0=\lim_{i\to+\infty}\int_{\T^d\times\R^d}\Delta\varphi_{\s_i}(x,v)d\mu_{\s_i,m_{\s_i}}=	\int_{\T^d\times\R^d}vD\varphi(x)d\mu_0.
\]
The proof is complete.

\end{proof}

The last result of this paper is well known, see for example \cite{HW}.
\begin{prop}
	$m_0$ is a solution of $\text{div}\Big(m \frac{\partial H}{\partial p}(x, Du_0)\Big)=0$ in the sense of distributions.
\end{prop}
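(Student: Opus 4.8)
The plan is to derive the statement directly from the three facts already in hand: that $\mu_0$ is a Mather measure for $L_{m_0}$ (the foregoing proposition), that $m_0=\pi\sharp\mu_0$ (Proposition \ref{m}), and that $u_0$ is a viscosity solution of $H(x,Du)=F(x,m_0)+c(m_0)$ (Proposition \ref{hj}). Since $H_{m_0}(x,p)=H(x,p)-F(x,m_0)$, the Hamilton-Jacobi equation rewrites as $H_{m_0}(x,Du_0)=c(m_0)$, so $u_0$ is a critical (weak KAM) solution for the Tonelli Hamiltonian $H_{m_0}$. The closedness identity \eqref{2-4} will supply the continuity equation once the velocity variable $v$ is identified on $\supp(\mu_0)$.

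First I would invoke the graph property of Mather measures. Because $\mu_0$ is a Mather measure for $L_{m_0}$, it is invariant under the Euler-Lagrange flow $\Phi_t^{L_{m_0}}$ and its support lies in a Lipschitz graph over $\T^d$. Weak KAM theory then guarantees that the critical solution $u_0$ is differentiable at every point of the projected Mather set $\pi(\supp(\mu_0))$, and that at each such $x$ the unique velocity carried by $\mu_0$ is the Legendre dual
\[
v=\frac{\partial H_{m_0}}{\partial p}(x,Du_0(x))=\frac{\partial H}{\partial p}(x,Du_0(x)),
\]
the second equality holding because $H_{m_0}$ and $H$ differ only by the $p$-independent term $F(x,m_0)$. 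Thus $\mu_0$ is concentrated on $\{(x,v):v=\frac{\partial H}{\partial p}(x,Du_0(x))\}$.

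Next I would feed this identification into the closedness relation. For any $f\in C^{\infty}(\T^d)\subset C^1(\T^d)$, equation \eqref{2-4} gives $\int_{\T^d\times\R^d}\langle v,Df(x)\rangle\,d\mu_0=0$. Replacing $v$ on $\supp(\mu_0)$ by $\frac{\partial H}{\partial p}(x,Du_0(x))$ produces an integrand depending on $x$ alone, so the change of variables $m_0=\pi\sharp\mu_0$ yields
\[
\int_{\T^d}\left\langle Df(x),\frac{\partial H}{\partial p}(x,Du_0(x))\right\rangle dm_0(x)=\int_{\T^d\times\R^d}\langle v,Df(x)\rangle\,d\mu_0=0,\quad\forall f\in C^{\infty}(\T^d),
\]
which is exactly the distributional formulation of $\text{div}\big(m_0\frac{\partial H}{\partial p}(x,Du_0)\big)=0$ recorded in the Remark following Definition \ref{ers}.

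The main obstacle is the second step: establishing the differentiability of $u_0$ on $\pi(\supp(\mu_0))$ and the identification of the carried velocity with $\frac{\partial H}{\partial p}(x,Du_0(x))$. This is the standard but nontrivial graph property for Tonelli systems (Mather's Lipschitz graph theorem together with Fathi's weak KAM theorem), and I would cite it from \cite{HW} rather than reprove it; once it is granted, the remainder is a formal manipulation of the closedness identity \eqref{2-4}.
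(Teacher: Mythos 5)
Your proof is correct, but it runs along a genuinely different track than the paper's. Both arguments rest on the same nontrivial external input, which you correctly isolate: the graph property from weak KAM theory, namely that $\pi$ restricted to $\supp(\mu_0)$ is injective and that $u_0$ is differentiable on the projected Mather set with $v=\frac{\partial H}{\partial p}(x,Du_0(x))$ there (cited from \cite{HW} in both cases; note your use of it requires the flow invariance of $\mu_0$, which holds because a closed measure with critical action is a genuine Mather measure, as recalled in the paper's remark). After that the two proofs diverge. The paper exploits the \emph{dynamical invariance} of $\mu_0$: it pushes the Hamiltonian flow down to a flow $\gamma_t$ on $\supp(m_0)$ via the graph identification, shows $\int f\circ\gamma_t\,dm_0=\int f\,dm_0$ for all $t$ using invariance of $\mu_0$ and bijectivity of $\gamma_t$ on the support, and then differentiates in $t$ to produce the divergence identity. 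You instead exploit the \emph{closedness} of $\mu_0$, i.e. the identity \eqref{2-4}, which was established explicitly in the proof of the preceding proposition (it is half of the verification that $\mu_0$ is a Mather measure), and simply substitute $v=\frac{\partial H}{\partial p}(x,Du_0(x))$ on $\supp(\mu_0)$ before pushing forward by $\pi$; since $F$ is independent of $p$, $\frac{\partial H_{m_0}}{\partial p}=\frac{\partial H}{\partial p}$, so this lands exactly on the distributional formulation. Your route is shorter and recycles what is already proved --- no flow on $\supp(m_0)$, no differentiation under the integral sign --- while the paper's route makes the dynamical content visible ($m_0$ is invariant under the projected characteristic flow), which is the more standard presentation. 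The only point worth making explicit in your write-up is that the substitution step needs $x\mapsto\frac{\partial H}{\partial p}(x,Du_0(x))$ to be $m_0$-measurable, which follows since $Du_0$ is Lipschitz on the projected Mather set by the graph theorem; this is minor and implicit in the paper as well.
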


\begin{proof}
Let $\Phi_{t}^{H_{m_0}}$ denote the Hamiltonian flow of $H_{m_0}$.
For any $x\in \supp(m_0)$, let $\gamma_{t}(x)=\pi\circ \Phi_{t}^{H_{m_0}}(x,Du_0(x))$.
	 Then, we have that
	 $$\frac{d}{dt}\gamma_{t}(x)=\frac{\partial H_{m_0}}{\partial p}\left(\gamma_{t}(x), Du_0(\gamma_{t}(x))\right).$$
	 Since the map $\pi: \supp(\mu_0) \to \supp(m_0)$ is one-to-one and its inverse is given by $x \mapsto (x, Du_0(x)))$ on $\supp(m_0)$, then $\gamma_t:\supp(m_0)\to \supp(m_0)$ is a bijection for each $t\in\R$. Note that, for each $t\in\R$ and any function $f \in C^{1}(\T^d)$, we get that
	 \begin{align*}
	 	 	\begin{split}
	 \int_{\supp(m_0)}f(\gamma_t(x))dm_0&=\int_{\supp(m_0)}f\circ \gamma_t(x)d\pi\sharp\mu_0\\&=\int_{\supp(\mu_0)}f\circ \gamma_t(\pi(x,p))d\mu_0\\
	 &=\int_{\supp(\mu_0)}f (\pi\circ \Phi^{H_{m_0}}_t(x,p))d\mu_0\\
	 &=\int_{\supp(\mu_0)}f (\pi(x,p))d\mu_0\\
	 &=\int_{\supp(\mu_0)}f (x)dm_0.
	 \end{split}
	 \end{align*}
	 Here, the first equality holds  since $m_0=\pi\sharp \mu_0$, the second one holds by the property of the push-forward, the third holds since $\gamma_t$ is a bijection, the fourth one comes from the $\Phi^{H_{m_0}}_t$-invariance property of $\mu_0$, and the last one is again due to the property of the push-forward.
	 So, for any function $f \in C^{1}(\T^d)$  and any $t\in\R$, one can deduce that
	 \begin{align*}
	 0 &=\frac{d}{dt}\int_{\T^d}{f(\gamma_{t}(x))\ dm_0(x)}= \int_{\T^d}{\big\langle Df(\gamma_{t}(x)), \frac{\partial H_{m_0}}{\partial p}(\gamma_{t}(x),Du_0(\gamma_{t}(x))) \big\rangle\ dm_0(x)} \\&= \int_{\T^d}{\big\langle Df(x), \frac{\partial H_{m_0}}{\partial p}(x,Du_0(x)) \big\rangle\ d m_0(x)}.
	 \end{align*}
	 Hence, $m_0$ satisfies the continuity equation which  completes the proof.
\end{proof}

\medskip
\noindent {\bf Acknowledgements:}

Renato Iturriaga was partly supported by Conacyt Mexico (Grant No. A1-S-33854).
Kaizhi Wang was partly supported by National Natural Science Foundation of China (Grant No. 12171315).



\end{document}